\newcommand{\Mdef}[2]{\newcommand{#1}{\relax \ifmmode #2 \else $#2$\fi}}
\newcommand{\tensor}{\otimes}
\newcommand{\sdr}{\rtimes}
\Mdef{\bhom}{\mathbf{\hat{H}om}}
\Mdef{\Mod}{\mathrm{mod}}
\newcommand{\st}{\; | \;}
\newtheorem{thm}{Theorem}[section]
\newtheorem{lemma}[thm]{Lemma}
\newtheorem{prop}[thm]{Proposition}
\newtheorem{cor}[thm]{Corollary}
\theoremstyle{definition}
\newcommand{\qqed}{\qed \\[1ex]}
\renewenvironment{proof}[1][\hspace*{-.8ex}]{\noindent {\bf Proof #1:\;}}{\qqed}
\Mdef{\PH} {\Phi^H}
\Mdef{\PK} {\Phi^K}
\Mdef{\PL} {\Phi^L}
\Mdef{\PT} {\Phi^{\T}}
\Mdef{\ef}{E{\cF}_+}
\Mdef{\etf}{\widetilde{E}{\cF}}
\Mdef{\eg}{E{G}_+}
\Mdef{\etg}{\tilde{E}{G}}
\Mdef{\infl}{\mathrm{inf}}
\Mdef{\defl}{\mathrm{def}}
\Mdef{\res}{\mathrm{res}}
\Mdef{\ind}{\mathrm{ind}}
\Mdef{\coind}{\mathrm{coind}}
\Mdef{\univ}{\mathcal{U}}
\Mdef{\Fp}{\mathbb{F}_p}
\Mdef{\Zpinfty}{\Z /p^{\infty}}
\Mdef{\Zpadic}{\Z_p^{\wedge}}
\newcommand{\lra}{\longrightarrow}
\newcommand{\lr}[1]{\langle #1 \rangle}
\newcommand{\Gspectra}{\mbox{$G$-{\bf spectra}}}
\newcommand{\spec}{\mathrm{Spec}}
\Mdef{\we}{\mathbf{we}}
\Mdef{\fib}{\mathbf{fib}}
\Mdef{\cof}{\mathbf{cof}}
\Mdef{\BI}{\mathcal{BI}}
\newcommand{\colim}{\mathop{  \mathop{\mathrm {lim}} \limits_\rightarrow} \nolimits}
\Mdef{\B}{\mathbb{B}}
\Mdef{\C}{\mathbb{C}}
\Mdef{\D}{\mathbb{D}}
\Mdef{\E}{\mathbb{E}}
\Mdef{\T}{\mathbb{T}}
\Mdef{\F}{\mathbb{F}}
\Mdef{\G}{\mathbb{G}}
\Mdef{\I}{\mathbb{I}}
\Mdef{\N}{\mathbb{N}}
\Mdef{\Q}{\mathbb{Q}}
\Mdef{\R}{\mathbb{R}}
\Mdef{\bbS}{\mathbb{S}}
\Mdef{\Z}{\mathbb{Z}}
\Mdef{\bA}{\mathbb{A}}
\Mdef{\bB}{\mathbb{B}}
\Mdef{\bC}{\mathbb{C}}
\Mdef{\bD}{\mathbb{D}}
\Mdef{\bE}{\mathbb{E}}
\Mdef{\bF}{\mathbb{F}}
\Mdef{\bG}{\mathbb{G}}
\Mdef{\bH}{\mathbb{H}}
\Mdef{\bI}{\mathbb{I}}
\Mdef{\bJ}{\mathbb{J}}
\Mdef{\bK}{\mathbb{K}}
\Mdef{\bL}{\mathbb{L}}
\Mdef{\bM}{\mathbb{M}}
\Mdef{\bN}{\mathbb{N}}
\Mdef{\bO}{\mathbb{O}}
\Mdef{\bP}{\mathbb{P}}
\Mdef{\bQ}{\mathbb{Q}}
\Mdef{\bR}{\mathbb{R}}
\Mdef{\bS}{\mathbb{S}}
\Mdef{\bT}{\mathbb{T}}
\Mdef{\bU}{\mathbb{U}}
\Mdef{\bV}{\mathbb{V}}
\Mdef{\bW}{\mathbb{W}}
\Mdef{\bX}{\mathbb{X}}
\Mdef{\bY}{\mathbb{Y}}
\Mdef{\bZ}{\mathbb{Z}}
\Mdef{\cA}{\mathcal{A}}
\Mdef{\cB}{\mathcal{B}}
\Mdef{\cC}{\mathcal{C}}
\Mdef{\mcD}{\mathcal{D}} 
\Mdef{\cE}{\mathcal{E}}
\Mdef{\cF}{\mathcal{F}}
\Mdef{\cG}{\mathcal{G}}
\Mdef{\mcH}{\mathcal{H}} 
\Mdef{\cI}{\mathcal{I}}
\Mdef{\cJ}{\mathcal{J}}
\Mdef{\cK}{\mathcal{K}}
\Mdef{\mcL}{\mathcal{L}}
\Mdef{\cM}{\mathcal{M}}
\Mdef{\cN}{\mathcal{N}}
\Mdef{\cO}{\mathcal{O}}
\Mdef{\cP}{\mathcal{P}}
\Mdef{\cQ}{\mathcal{Q}}
\Mdef{\mcR}{\mathcal{R}}
\Mdef{\cS}{\mathcal{S}}
\Mdef{\cT}{\mathcal{T}}
\Mdef{\cU}{\mathcal{U}}
\Mdef{\cV}{\mathcal{V}}
\Mdef{\cW}{\mathcal{W}}
\Mdef{\cX}{\mathcal{X}}
\Mdef{\cY}{\mathcal{Y}}
\Mdef{\cZ}{\mathcal{Z}}
\Mdef{\ca}{\mathcal{a}}
\Mdef{\ct}{\mathcal{t}}
\Mdef{\At}{\tilde{A}}
\Mdef{\Bt}{\tilde{B}}
\Mdef{\Ct}{\tilde{C}}
\Mdef{\Et}{\tilde{E}}
\Mdef{\Ht}{\tilde{H}}
\Mdef{\Kt}{\tilde{K}}
\Mdef{\Lt}{\tilde{L}}
\Mdef{\Mt}{\tilde{M}}
\Mdef{\Nt}{\tilde{N}}
\Mdef{\Pt}{\tilde{P}}
\Mdef{\tA}{\tilde{A}}
\Mdef{\tB}{\tilde{B}}
\Mdef{\tC}{\tilde{C}}
\Mdef{\tE}{\tilde{E}}
\Mdef{\tH}{\tilde{H}}
\Mdef{\tK}{\tilde{K}}
\Mdef{\tL}{\tilde{L}}
\Mdef{\tM}{\tilde{M}}
\Mdef{\tN}{\tilde{N}}
\Mdef{\tP}{\tilde{P}}
\Mdef{\ft}{\tilde{f}}
\Mdef{\xt}{\tilde{x}}
\Mdef{\yt}{\tilde{y}}
\Mdef{\Ab}{\overline{A}}
\Mdef{\Bb}{\overline{B}}
\Mdef{\Cb}{\overline{C}}
\Mdef{\Db}{\overline{D}}
\Mdef{\Eb}{\overline{E}}
\Mdef{\Fb}{\overline{F}}
\Mdef{\Gb}{\overline{G}}
\Mdef{\Hb}{\overline{H}}
\Mdef{\Ib}{\overline{I}}
\Mdef{\Jb}{\overline{J}}
\Mdef{\Kb}{\overline{K}}
\Mdef{\Lb}{\overline{L}}
\Mdef{\Mb}{\overline{M}}
\Mdef{\Nb}{\overline{N}}
\Mdef{\Ob}{\overline{O}}
\Mdef{\Pb}{\overline{P}}
\Mdef{\Qb}{\overline{Q}}
\Mdef{\Rb}{\overline{R}}
\Mdef{\Sb}{\overline{S}}
\Mdef{\Tb}{\overline{T}}
\Mdef{\Ub}{\overline{U}}
\Mdef{\Vb}{\overline{V}}
\Mdef{\Wb}{\overline{W}}
\Mdef{\Xb}{\overline{X}}
\Mdef{\Yb}{\overline{Y}}
\Mdef{\Zb}{\overline{Z}}
\Mdef{\db}{\overline{d}}
\Mdef{\hb}{\overline{h}}
\Mdef{\qb}{\overline{q}}
\Mdef{\rb}{\overline{r}}
\Mdef{\tb}{\overline{t}}
\Mdef{\ub}{\overline{u}}
\Mdef{\vb}{\overline{v}}
\Mdef{\hc}{\hat{c}}
\Mdef{\he}{\hat{e}}
\Mdef{\hf}{\hat{f}}
\Mdef{\hA}{\hat{A}}
\Mdef{\hH}{\hat{H}}
\Mdef{\hJ}{\hat{J}}
\Mdef{\hM}{\hat{M}}
\Mdef{\hP}{\hat{P}}
\Mdef{\hQ}{\hat{Q}}
\Mdef{\thetab}{\overline{\theta}}
\Mdef{\phib}{\overline{\phi}}
\Mdef{\uA}{\underline{A}}
\Mdef{\uB}{\underline{B}}
\Mdef{\uC}{\underline{C}}
\Mdef{\uD}{\underline{D}}
\Mdef{\bolda}{\mathbf{a}}
\Mdef{\boldb}{\mathbf{b}}
\Mdef{\bfD}{\mathbf{D}}
\Mdef{\fm}{\frak{m}}
\Mdef{\fp}{\frak{p}}
\newcommand{\fX}{\mathfrak{X}}
\Mdef{\eps}{\epsilon}
\newcommand{\sub}{\mathrm{Sub}}
\newcommand{\cEi}{\cE^{-1}}
\newcommand{\cNbar}{\overline{\cN}}
\newcommand{\Hbar}{\overline{H}}
\newcommand{\modules}{\mbox{-modules}}
\newcommand{\bbN}{\mathbb{N}}
\newcommand{\Ah}{\hat{A}}
\renewcommand{\tb}{\overline{\times}}
\newcommand{\diag}{\mathrm{diag}}
\newcommand{\Tt}{\tilde{T}}
\begin{document}
\title{Rational $SU(3)$-equivariant cohomology theories}

\author{J.P.C.Greenlees}
\address{Mathematics Institute, Zeeman Building, Coventry CV4, 7AL, UK}
\email{john.greenlees@warwick.ac.uk}

\date{}

\begin{abstract}
We describe the spectral space of conjugacy classes of subgroups of
$SU(3)$, together with the additional structure of a sheaf of rings
and a component structure. It is a disjoint union of 18
blocks each dominated by a subgroup. For each of these blocks we identify a sheaf of rings
and component structure. Taken together, this gives an abelian
category $\cA (SU(3))$ designed to reflect the structure of rational $SU(3)$-equivariant cohomology theories, and we assemble the
results from elsewhere to show that the category of rational
$SU(3)$-spectra  is Quillen equivalent to the category of differential
graded objects of $\cA (SU(3))$.

\end{abstract}
\thanks{The author is grateful for comments, discussions  and related
  collaborations with S.Balchin, D.Barnes, T.Barthel, M.Kedziorek,
  L.Pol, J.Williamson. The work is partially supported by EPSRC Grant
  EP/W036320/1. The author  would also  like to thank the Isaac Newton
  Institute for Mathematical Sciences, Cambridge, for support and
  hospitality during the programme Equivariant Homotopy Theory in
  Context, where later parts of  work on this paper was undertaken. This work was supported by EPSRC grant EP/Z000580/1.  } 
\maketitle

\tableofcontents
\section{Introduction}
\subsection{Context}
It is conjectured \cite{AGconj} that for each compact Lie group $G$ there is an abelian
category $\cA (G)$ and a Quillen equivalence between the category of
rational $G$-spectra and the category of differential graded objects
of $\cA(G)$: 
$$\Gspectra \simeq_Q \mbox{DG-}\cA (G). $$
This is known for a range of small groups and the purpose of this note
is to prove the conjecture for $G=SU(3)$ and all its subgroups.

\subsection{Strategy}
In effect the algebraic model $\cA (G)$ is built by assembling data for each conjugacy
class of subgroups $H\subseteq G$. Indeed, the data over $(H)$ is a
model for $G$-spectra with geometric isotropy concentrated on the
single conjugacy class $(H)$, which is equivalent to  free
$W_G(H)$-spectra \cite{spcgq}  and this has the  model $\cA (G|(H))$
consisting of torsion modules over the
twisted group ring $H^*(BW_G^e(H))[W_G^d(H)]$ \cite{gfreeq2}, where
the Weyl group $W_G(H)=N_G(H)/H$ has identity component $W_G^e(H)$ and
discrete quotient $W_G^d(H)=\pi_0(W_G(H))$. We will
view this as stating that $\cA (G)$ consists of `sheaves' over the space
$\fX_G=\sub(G)/G$ of conjugacy classes of subgroups of $G$, but the precise
meaning of the word `sheaves' and the additional structure on these
`sheaves' needs considerable elucidation;  indeed it is the main content of
the model. 

In any case, this form suggests that if $H$ is a subgroup of $G$ and  we restrict sheaves over
$\sub(G)/G$ to conjugacy classes of subgroups of  $H$, we may
expect the category to be closely related to $\cA (H)$, though of
course the fusion of conjugacy classes along the map $\fX_H=\sub(H)/H\lra
\sub(G)/G=\fX_G$ will need to be taken into account, along with the
transition from $W_H(K)$ to $W_G(K)$. This in turn means that when
constructing $\cA (G)$ it is natural to adopt an inductive approach
and begin by giving a construction of $\cA (H)$ for all subgroups $H$
of $G$. Information on subgroups  of $U(2)$ is already in \cite{u2q},
which is in effect the most complicated part of the
model. Nonetheless, some work and  care is needed to assemble the
information for $SU(3)$ itself.  

\subsection{Partitions}  The fact that  $\sub (G)/G$ is the
Balmer spectrum of finite rational $G$-spectra \cite{spcgq} suggests some of the
relevant additional structure. The Balmer spectrum is equipped
with the Zariski topology, and we write $\fX_G=\sub (G)/G$ for this
space. As described in \cite{prismatic} we may use the language of
Priestley spaces, and state that $\fX_G$ has underlying constructible
topology on $\sub (G)/G$ being the h-topology (the quotient topology of the
Hausdorff metric topology on $\sub(G)$), and the spectral ordering is
the cotoral ordering\footnote{$K$ is {\em cotoral} in $H$ if $K$ is normal
  in $H$ with quotient a torus. }. Thus the closed sets of $\fX_G$ are precisely
the  $h$-closed sets
closed under cotoral specialization.

One may show in general that $\fX_G$ admits a partition into Zariski clopen blocks of
rather standard forms: thus
 $$\fX_G=\cV_1^G \amalg  \cV_2^G\amalg 
\cdots \amalg \cV_n^G,$$
 where each summand $\cV_i^G$ is `dominated' by a 
subgroup $H=H_i$ of $G$.  This means first of all that $W_G(H)$ is
finite. We may then choose an h-neighbourhood $\cN^H_H$ of $H$
in $\Phi (H)$ (conjugacy classes of subgroups of with finite Weyl
groups), so that  the image $\cN^G_H$ of $\cN^H_H$ in 
$\sub(G)/G$ by fusing $H$-conjugacy classes into $G$-conjugacy 
classes, also consists of subgroups with finite
Weyl group. Now we take  $\cV^G_i=\cV_H^G=\Lambda_{ct}(\cN^G_H)$ to be  the closure under 
cotoral specialization of $\cN^G_H$. In particular $\cV^G_H$ consists
of conjugacy classes of subgroups of $H$.

The existence of the partition of $\fX_G$ into blocks means that the algebraic
model splits as a product 
$$\cA (G)=\cA (G| \cV_1^G)\times \cA (G| \cV_2^G)\times \cdots \times
\cA (G| \cV_n^G). $$
This corresponds to the decomposition
$$\Gspectra \simeq \Gspectra \lr{\cV^G_1}\times
\Gspectra \lr{\cV^G_2}\times \cdots \times \Gspectra \lr{\cV^G_n}$$
into $G$-spectra with the specified geometric isotropy given by the
Burnside ring idempotents supported on the blocks.

Restricting attention to one piece, let us consider $\cA (G|\cV^G_H)$
where $\cV_H^G$ is the block dominated by the subgroup $H$. For
subgroups of $SU(3)$, it
will be easy to see what the category $\cA (G|\cV^G_H)$ should
be, but it is worth explaining how this should work more generally.

\subsection{General expectations}
The algebraic model $\cA (G|\cV^G_H)$  depends on the structure of the
subgroup $H$, on the fusion from
$H$-conjugacy to $G$-conjugacy and on the structure of normalizers. We
describe here the simplest possible behaviour; the general
behaviour is of a similar form. Suppose then that
the identity component of $H$ has the form $H_e=\Sigma \times_Z T$,
where $\Sigma$ is semisimple, $T$ is a torus and $Z$ is a finite
central subgroup. Here $T$ is the identity component of the centre
of $H_e$ so it is characteristic and acted upon by the finite component
group $W=H_d$. We consider the rational representation $\Lambda_0^{\Q}=H_1(T; \Q)$
of the finite group $W$ and write it as a sum of isotypical pieces
$$\Lambda_0^{\Q}=S_1^{n_1}\oplus S_2^{n_2}\oplus \cdots \oplus S_s^{n_s}$$
where the $S_i$ are pairwise non-isomorphic simple representations and
$S_1$ is the trivial representation. The simplest case is when this
rational decomposition comes from an integral 
decomposition of the toral lattice $\Lambda_0=H_1(T; \Z)$, and a
direct product decomposition of the torus
$$T=T_1\times T_2\times \cdots \times T_s. $$ 
In this simplest case
$$\cN^H_H=\{T_1\} \times \cN^H_2\times \cdots \times \cN^H_s, $$
where each $\cN^H_i$ is the compact, Hausdorff, totally disconnected
space  of $W$-invariant subgroups of $T_i$, and in the simplest case
this product decomposition is preserved by the fusion  to
$G$-conjugacy classes. In that case, up to fusion, $\cV^G_H$ is the product with
$\sub (T_1)$. 

The message is that the general form of $\fX_G$  is determined by
the isotypical decomposition of the rational toral lattice
$\Lambda_0^{\Q}$. Fusion and group theory mean that the actual
structure needs detailed analysis.

Almost all the examples $H$ that have so far been completely determined
have just one isotypical piece. The exception is the normalizer of the
maximal torus in $U(2)$ as in \cite{t2wqmixed}.

The model $\cA (G|\cV^G_H)$ takes the form of a sheaf of modules over a
sheaf of rings, with stalk $H^*(BW_G^e(K))$ over $K$. This has some
additional structure. First of all, the stalks over
cotoral subgroups are related in a way reflecting the Localization
Theorem, and the whole structure is equivariant for the component
structure given by the finite groups $W_G^d(K)$ associated to each subgroup.

\subsection{Associated work in preparation}
This paper is the fifth in a series  of 5 constructing an algebraic
category $\cA (SU(3))$ and showing it gives an algebraic model for 
rational $SU(3)$-spectra. This series gives a concrete 
illustrations of general results in small and accessible 
examples.

The first paper \cite{t2wqalg} describes the group theoretic data that feeds into the construction of an
abelian category $\cA (G)$ for a toral group $G$ and makes it
explicit for toral subgroups of rank 2 connected groups.

The second paper \cite{gq1} constructs algebraic models for all relevant 1-dimensional 
blocks, and the results are applied in the present
paper to each of the five 1-dimensional blocks for $SU(3)$.
The third paper \cite{t2wqmixed} constructs algebraic models for
blocks of rank 2 toral groups of mixed type.

The fourth paper \cite{u2q} constructs $\cA (U(2))$
in 7 blocks (5 of them  1-dimensional, and 2 of them 2-dimensional)
and shows it is equivalent to the category of rational
$U(2)$-spectra. The analysis of the 1-dimensional blocks uses
\cite{gq1} and the analysis of the block of the maximal torus
normalizer uses \cite{t2wqmixed}.

In this fifth paper, the work of the previous four parts is assembled
to a model of rational $SU(3)$-spectra. The 7 blocks from $U(2)$
survive with a little fusion, there are 2 more 1-dimensional blocks
and 9 new 0-dimensional blocks. 

This series is part of a more general programme. Future installments
will consider blocks with Noetherian Balmer spectra \cite{AGnoeth}
(the case where $\Lambda_0^{\Q}$ is a trivial representation)  and
those with no cotoral inclusions \cite{gqwf} (the case where
$\Lambda_0^{\Q}$ contains no trivial representation).
An account of the general nature of the models is in preparation
\cite{AVmodel}, and the author hopes that this will be the basis of the proof that the
category of rational $G$-spectra has an algebraic model in general.

\subsection{Organization}
In Section \ref{sec:proper} we  recall from \cite{gq1,u2q} 
the blocks $\cV^{G'}_H$ of $\fX_{G'} $ for various subgroups $G'$ of $G$.
In Section \ref{sec:oldsu3subgps} we explain the necessary changes to the
blocks $\cV^{G'}_H$ when taken up to $G$-conjugacy to form blocks
$\cV^G_H$ in $\fX_G$, and in Section \ref{sec:newsu3subgps} we
describe the blocks $\cV^G_H$ of groups $H$ not contained in $U(2)$.
Finally,  in Section \ref{sec:models} we describe the models $\cA
(G|\cV^G_H)$ where these differ from $\cA (H|\cV^{G'}_H)$ and explain
why they provide models for $G$-spectra over $\cV^G_H$. 

\subsection{Notation}
The ambient group throughout this paper is $G=SU(3)$, although we will
usually write the full name. The principal copy of $U(2)$ is the subgroup
preserving the decomposition $\C^3=\C^2\oplus \C$. We write $\T$ for
the subgroup of diagonal matrices: this is the chosen maximal torus of both
$U(2)$ and $SU(3)$. We write $Z$ for the centre of $U(2)$, which
consists of matrices $\diag (\lambda, \lambda, \lambda^{-2})$, and
 $\Tt$ for the  maximal torus of $SU(2)$ (consisting of matices $\diag
 (\lambda, \lambda^{-1}, 1)$).

Since $Z\cap \Tt $ is of order 2, we may need to consider central
products, and if $A\subseteq Z, B\subseteq SU(2)$ we write $A\times_2 B$
for the image of $A\times B$ in $U(2)=(Z\times SU(2))/C_2$ under the
central quotient. 

\section{Subgroups  of  proper subgroups of $SU(3)$ }
\label{sec:proper}

In constructing the models $\cA (G)$, it is convenient to proceed
group by group, steadily increasing the complexity of $G$. In fact $\cA (G)$  essentially
contains the models $\cA (H)$ for all subgroups $H$ of $G$, so it
is convenient to have a the models $\cA (H)$ to hand before tackling
$\cA (G)$. The word `essentially' covers two main changes: (i) several 
$H$-conjugacy classes may fuse to form a $G$-conjugacy class and (ii)
the normalizer of a subgroup $K$ in $H$ is a subgroup of the
normalizer in $G$ and hence the $H$-Weyl group $W_H(K)$ is a subgroup
of the $G$-Weyl group $W_G(K)$. These two changes mean that $\cA (H)$
will need to be adapted  to give the corresponding part of
the model in $G$, but the adjustments are secondary in nature. This
section gives a summary of the spaces $\fX_G$ for proper subgroups $G$
of $SU(3)$.

We may tabulate the dominant subgroups $H$ and their blocks as
follows. A subgroup $H$ is indicated by the pair $(H_e, F)$ where $F$ is a
finite subgroup of $W_G(H_e)$. In the rest of the section we will give
a little more detail. 
\label{sec:proper}
$$\begin{array}{ll|ccc|}
G&(H_e, F)&\dim (\cV^G_{(H_e,F)})&W_G(H)& \\
\hline 
{SO(2)}&(SO(2),1)&1&1&[8]\\
\hline 
O(2)&(SO(2),1)&1&C_2&[8]\\
&(SO(2),C_2)&1&1&[8]\\
\hline 
SO(3)&(T, 1) &1&C_2&[8]\\
&(T, C_2) &1&1&[8]\\
&(SO(3),1)&0&1&\mathrm{Discrete}\\
&(1, A_5)&0&1&\mathrm{Discrete}\\
&(1, \Sigma_4)&0&1&\mathrm{Discrete}\\
&(1, A_4)&0&C_2&\mathrm{Discrete}\\
&(1, D_4)&0&\Sigma_3&\mathrm{Discrete}\\
\hline 
U(2)&(T^2, 1) &2&C_2&[5,2]\\
&(T^2, C_2) &2&1&[9]\\
&(U(2),1)&1&1&[8]\\
&(Z, A_5)&1&1&[8]\\
&(Z, \Sigma_4)&1&1&[8]\\
&(Z, A_4)&1&C_2&[8]\\
&(Z, D_4)&1&\Sigma_3&[8]\\
\hline 
\end{array}$$

\subsection{The circle group $SO(2)$}
The proper subgroups of the circle group form the set $\cC$ of finite
cyclic groups, and $\fX_{SO(2)}=\sub (SO(2))=\cV^{SO(2)}_{SO(2)}$ is its one-point
compactification. The cotoral relation has finite subgroups cotoral in
$SO(2)$.  

Of course the group $Spin(2)$ is also a circle group, so we do not
need a separate entry. However we comment that factoring out the
elment of order 2 gives a map
$\fX_{Spin(2)}\lra \fX_{SO(2)}$ which is surjective,
but with fibres consisting of a single point over cyclic subgroups of even
order and two points over cyclic subgroups of odd order.

\subsection{The group $O(2)$}
The space $\fX_{O(2)}=\sub(O(2))/O(2)$ can be broken into  two blocks. The 
toral part $\cV^{O(2)}_{SO(2)}$  is 
equal to $\fX_{SO(2)}=\sub(SO(2))$ since each subgroup of
$SO(2)$ is characteristic. The remainder consists of the 
discrete space 
$\mcD$ of conjugacy classes of finite dihedral groups, together with 
$O(2)$ itself as the one point compactification. Altogether we have 
$$\fX_{O(2)}=\cV^{O(2)}_{O(2)}\amalg \cV^{O(2)}_{SO(2)}.$$

\subsection{The group $Pin(2)$}
The space $\fX_{Pin(2)}=\sub(Pin(2))/Pin(2)$ can be broken into  two blocks. The 
toral part $\cV^{Pin(2)}_{Spin(2)}$  is 
homeomorphic to $\fX_{Spin(2)}=\sub(Spin(2))$ since each
subgroup of $Spin(2)$ is characteristic. The remainder consists of the 
discrete space 
$\cQ$ of conjugacy classes of finite quaternion groups, together with 
$Pin(2)$ itself as the one point compactification. Altogether we have 
$$\fX_{Pin(2)}=\cV^{Pin(2)}_{Pin(2)}\amalg \cV^{Pin(2)}_{Spin(2)}.$$
The only reason for writing this out is to comment that although this is
homeomorphic to $\fX_{O(2)}$ the homeomorphism is factoring out the
centre on the $Pin(2)$ block, but not on the $Spin(2)$ block.

\subsection{The group $SO(3)$}
The space $\fX_{SO(3)}=\sub(SO(3))/SO(3)$ can be broken into 7 blocks. There
are 5 singleton blocks $\cV^{SO(3)}_{H}$ dominated by $H\in \{SO(3), A_5, \Sigma_4, A_4,
D_4 \}$. There is the block $\cV^{SO(3)}_{SO(2)}$ dominated by the
maximal torus $SO(2)$, and
there is the block $\cV^{SO(3)}_{O(2)}$ dominated by the
normalizer $O(2)$ of the maximal
torus. In summary, we have a partition  
$$\fX_{SO(3)}=\cV^{SO(3)}_{SO(2)}\amalg \cV^{SO(3)}_{O(2)}\amalg 
\cV^{SO(3)}_{SO(3)}\amalg  \cV^{SO(3)}_{A_5}\amalg \cV^{SO(3)}_{\Sigma_4}\amalg \cV^{SO(3)}_{A_4}  \amalg \cV^{SO(3)}_{D_4}  $$
into 7 clopen pieces. We note here that $\cV^{SO(3)}_{O(2)}$ is the
1-point compactification of $\mcD'$ of dihedral subgroups of order
$\geq 6$. The remaining two conjugacy classes from $\mcD$ are treated
separately. In $SO(3)$, the dihedral group $D_2$ is conjugate to $C_2$
so that it appears in $\cV^{SO(3)}_{SO(2)}$. It is convenient to treat
the dihedral group $D_4$ separately in $\cV^{SO(3)}_{D_4}$ because its Weyl group
is larger than that of all larger dihedral groups. 

\subsection{The group $SU(2)$}
At this level, the analysis of $SU(2)$ is exactly like that of
$SO(3)$. We need only replace the dominant groups $SO(3), O(2), SO(2), A_5, 
\Sigma_4, A_4$ and $D_4$ by their double covers,
$SU(2), Pin(2), Spin(2), \tilde{A}_5, 
\tilde{\Sigma}_4, \tilde{A}_4$ and $\tilde{D}_4$. The actual sets of
subgroups in each block are different of course.

However the changes are perhaps less
than expected, since finite subgroups outside the torus block
intersect the centre trivially. This means that reducing mod the centre gives a
homeomorphism $\cV^{U(2)}_{\tilde{H}}\cong \cV^{SO(3)}_H$ unless
$H=SO(2)$. This homeomorphism also preserves Weyl groups. On the other
hand, reduction modulo the centre gives a map
$\cV^{SU(2)}_{Spin(2)}\lra \cV^{SO(3)}_{SO(2)}$ which is surjective,
but with fibres consisting of a single point over cyclic subgroups of even
order and two points over cyclic subgroups of odd order. The Weyl
groups also  change as we discuss below. 

In any case, we have a partition  
$$\fX_{SU(2)}=\cV^{SU(2)}_{Spin(2)}\amalg \cV^{SU(2)}_{Pin(2)}\amalg 
\cV^{SU(2)}_{SU(2)}\amalg  \cV^{SU(2)}_{\tilde{A}_5}\amalg \cV^{SU(2)}_{\tilde{\Sigma}_4}\amalg \cV^{SU(2)}_{\tilde{A}_4}  \amalg \cV^{SU(2)}_{\tilde{D}_4}  $$

\subsection{The group $U(2)$}
The group $U(2)$ is quite involved. We give a summary here, and a full description is given in 
\cite{u2q}. 

The centre of $U(2)$ consists of scalar matrices and the quotient map 
$q: U(2)\lra PU(2)=SO(3)$ gives a bijection of blocks, so 
there are 7 blocks of $\sub (U(2))/U(2)$. Each of them is of 
dimension one more than the corresponding block in $SO(3)$. 

The partition of $SO(3)$ into the seven clopen pieces gives a clopen partition of $\fX_{U(2)}$ into seven blocks, 
$$p_*^{-1}\cV^{SO(3)}_H=\cV^{U(2)}_{p^{-1}H},  $$
where
$$p^{-1}(H)=\Ht\times_{C_2}Z, $$
where $Z=ZU(2)$ consists of the scalar matrices. 
Note that if $K$ lies in $p_*^{-1}(\cV^{SO(3)}_H)$ then $p(K)\subseteq (H)$ and so 
$\Kt=p^{-1}(K) \subseteq \Ht$ and $K\subseteq \Ht\times_{C_2}T$. 

Thus the partition takes the form 
$$\fX_{U(2)}=\cV^{U(2)}_{T^2}\amalg \cV^{U(2)}_{Pin(2)\times_{C_2}Z}\amalg 
\cV^{U(2)}_{U(2)}\amalg 
\cV^{U(2)}_{\tilde{A}_5\times_{C_2}Z}\amalg 
\cV^{U(2)}_{\tilde{\Sigma}_4\times_{C_2}Z}\amalg 
\cV^{U(2)}_{\tilde{A}_4 \times_{C_2}Z}\amalg 
\cV^{U(2)}_{\tilde{D}_4\times_{C_2}Z} ,  $$
where the first 2 blocks are 2-dimensional, and the remaining 5 blocks
are 1-dimensional. 

\section{Old subgroups of $SU(3)$}
\label{sec:oldsu3subgps}

Many subgroups of $SU(3)$ are conjugate to subgroups of $U(2)$, which 
were analyzed in \cite{u2q},  as summarised in Section 
\ref{sec:proper} above. For these subgroups, the only question is whether 
$U(2)$-conjugacy classes fuse in $SU(3)$. The question is settled in
this section. 

\subsection{Singular subgroups}
\label{subsec:singsub}
We make constant reference to subgroups of the maximal torus, so it is
worth collecting basic facts. The maximal torus consists of elements
$\diag (z_1, z_2, z_3) $ with $z_1z_2z_3=1$. The singular elements are
those lying in more than one maximal torus, and they are those lying
in the kernel $U_{\alpha}$  of some global root $\vartheta_{\alpha}: \T\lra T$,
which is to say they are elements with $z_i=z_j$ for some $i\neq
j$. The centre is the intersection of these: the cyclic group
generated by $\diag (\omega, \omega, \omega)$ with $\omega=e^{2\pi
  i/3}$.

We will repeatedly make the argument that the normalizer of a non-singular
subgroup normalizes the maximal torus.

\begin{lemma}
If $S\subseteq \T$ contains a non-singular element then
$N_G(S)\subseteq N_G(\T)$.
  \end{lemma}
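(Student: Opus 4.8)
The plan is to reduce everything to the observation, implicit in the characterization of singular elements recalled just above, that a non-singular element of $\T$ is \emph{regular}, meaning its centralizer in $G$ is exactly $\T$. Indeed, if $s=\diag(z_1,z_2,z_3)$ has its three entries distinct, then any $g$ commuting with $s$ preserves each eigenspace of $s$; since these are the three distinct coordinate lines, $g$ must be diagonal, and a diagonal element of $SU(3)$ lies in $\T$. Thus $C_G(s)=\T$. Once this is in hand, the lemma follows by comparing two descriptions of a single centralizer.

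Concretely, I would first fix a non-singular element $s\in S$, which exists by hypothesis, and record that $C_G(s)=\T$. Then I would take an arbitrary $g\in N_G(S)$ and set $t:=gsg^{-1}$. Because $g$ normalizes $S$ and $s\in S\subseteq\T$, we get $t\in S\subseteq\T$. Moreover $t$ is conjugate to $s$, so it has the same multiset of eigenvalues $\{z_1,z_2,z_3\}$; these are distinct, so $t$ is again non-singular. Running the same eigenspace argument for $t\in\T$ gives $C_G(t)=\T$. On the other hand, conjugating the centralizer directly, $C_G(t)=C_G(gsg^{-1})=g\,C_G(s)\,g^{-1}=g\T g^{-1}$. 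Equating the two expressions yields $g\T g^{-1}=\T$, i.e. $g\in N_G(\T)$, which is exactly the claim.

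I do not anticipate a serious obstacle: the argument is short and self-contained. The one point requiring care is the identification $C_G(s)=\T$ for non-singular $s$, which is simply the distinct-eigenvalues description of singularity made explicit. The only other thing to check is that conjugation preserves the eigenvalue multiset, so that non-singularity passes from $s$ to $t=gsg^{-1}$; this is what permits the same centralizer computation to be applied to $t$ and is the crux that makes the two-sided comparison of $C_G(t)$ work.
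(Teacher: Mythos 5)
Your proof is correct and is essentially the paper's argument: the paper notes that $g\in N_G(S)$ gives $S=S^g\subseteq \T^g$, so a non-singular element of $S$ lies in both $\T$ and $\T^g$, forcing $\T^g=\T$ since non-singular means lying in a unique maximal torus. You reach the same conclusion by making the equivalent fact $C_G(s)=\T$ explicit via the eigenspace decomposition and conjugating centralizers; both versions turn on a regular element of $\T$ determining $\T$ uniquely.
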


  \begin{proof}
If $S\subseteq \T$ and $S^a=S$ then $S\subseteq \T^{a}$, so if $S$
contains a non-singular element then $\T^a=\T$.
    \end{proof}
    
This deals with most cases, and the remaining cases are simplified by
low dimensions and the following easily verified fact. 
  
\begin{lemma}
  \label{lem:singsub}
The only subgroups of $SU(3)$ consisting entirely of singular elements are those lying
in a single kernel $U_{\alpha}$.
\end{lemma}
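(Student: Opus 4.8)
The plan is to reduce the all-singular hypothesis to a statement about the Lie algebra of the identity component, and then to deal with the component group separately; it is the component group that carries the only real difficulty. Write $S_e$ for the identity component of $S$, and call an element \emph{non-singular} when its three eigenvalues are distinct (equivalently, when it lies in a unique maximal torus).

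First I would show that $S_e$ is abelian, hence a torus, and after conjugation a subtorus of $\T$. A connected compact subgroup of $SU(3)$ has rank $1$ or $2$. If it has rank $2$ it contains a maximal torus, which is conjugate to $\T$ and so contains non-singular elements, against the hypothesis. If it has rank $1$ and is non-abelian it is a quotient of $SU(2)$ acting on $\C^3$ through a three-dimensional representation, necessarily either $2\oplus 1$ or the irreducible representation (whose image is $SO(3)$); in the first case a generic element has eigenvalues $e^{i\theta},e^{-i\theta},1$ and in the second $e^{2i\theta},1,e^{-2i\theta}$, so in either case there are non-singular elements. Hence $S_e$ is abelian, so a torus, and may be conjugated into $\T$.

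Next I would pin down the subtorus. Since $\T$ itself contains non-singular elements, $S_e\neq\T$, so $\dim S_e\leq 1$; if $\dim S_e=0$ then $S_e$ is trivial and lies in every $U_\alpha$. In the remaining case $S_e$ is a circle with Lie algebra a line $\ell\subseteq\mathfrak{t}$. The singular locus inside $\T$ is the union of the walls $U_\alpha$, whose Lie algebra is the union of the root hyperplanes $\ker d\vartheta_\alpha$; as $S_e=\exp(\ell)$ is contained in this union and $\ell$ is a single line, $\ell$ must lie in one hyperplane, whence $S_e\subseteq U_\alpha$ for a single $\alpha$. This proves the lemma whenever $S$ is connected.

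The main obstacle is the passage from $S_e$ to $S$, and here the difficulty is really one of formulation, since the statement as phrased has genuine disconnected exceptions. For instance the diagonal subgroup $\{\diag(\pm1,\pm1,\pm1)\mid \det=1\}\cong(\Z/2)^2$ consists entirely of singular elements, yet its three involutions lie in the three distinct walls $U_{12}$, $U_{13}$, $U_{23}$, so it is contained in no single $U_\alpha$. Such finite subgroups are nonetheless harmless for the applications: one checks directly that $N_G(S)\subseteq N_G(\T)$, and they are accounted for among the discrete blocks. Thus the clean statement is the connected one established above, and the real content of the disconnected case is to recognise that these finite exceptions must be segregated rather than forced into a single kernel.
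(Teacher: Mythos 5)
Your connected-case argument is sound, but the more important part of your proposal is the counterexample, and it is correct: the $2$-torsion subgroup $V=\{\diag(\pm1,\pm1,\pm1):\det=1\}\cong(\Z/2)^2$ of $\T$ consists entirely of singular elements (every element has a repeated eigenvalue), yet it lies in no single $U_\alpha$ and is conjugate into none, since each $U_\alpha$ is a circle and $V$ is not cyclic. The paper offers no proof of this lemma --- it is asserted as ``easily verified'' --- so there is no argument to compare yours against; the honest conclusion is that the statement is false as written. Within $\T$ the exceptions can be pinned down exactly: if $A\subseteq\T$ is all-singular but contained in no single $U_\alpha$, then $A$ is the union of the three proper subgroups $A\cap U_\alpha$, which (by the standard lemma on covering a group by three proper subgroups) forces each to have index $2$ with common intersection $A\cap\bigcap_\alpha U_\alpha=A\cap Z$ and $A/(A\cap Z)\cong(\Z/2)^2$; hence $A\subseteq\{t\in\T: t^2\in Z\}=VZ$, and the only possibilities are $A=V$ and $A=VZ$, where $Z$ denotes the centre.

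Two corrections to your closing remarks. First, these exceptions do not land in discrete blocks: $C_G(V)=\T$, so $N_G(V)=N_G(\T)=\T\rtimes\Sigma_3$ and $W_G(V)$ is infinite; since $V$ (and likewise $VZ$) is normal in $\T$ with torus quotient, it is cotoral in $\T$ and already lives in the toral block $\cV^G_{(T^2,1)}$, so no segregation into new blocks is required. Second, the place the failure actually bites is Lemma \ref{lem:normtorus}(b) and its full-subgroup analogue, whose trichotomy tacitly invokes the present lemma: for $S=V$ or $VZ$ the conclusion of case (b) is wrong ($N_G(S)$ is all of $N_G(\T)$, not $N_{U(2)}(S)$), whereas the conclusion of case (a) --- the normalizer is generated by $T^2$ together with the permutations preserving $S$ --- is the correct one. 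The repair is therefore to restate the lemma as ``\ldots are those lying in a single kernel $U_\alpha$, together with the conjugates of $V$ and $VZ$,'' and to route these two subgroups through case (a) of Lemma \ref{lem:normtorus}.
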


Some other variants are also useful. 
\begin{lemma}
\label{lem:central}
If $a\not \in N_G(\T)$ and $A, B\subseteq \T$ with $A=B^a$ then $A$ lies in a singular circle. 
The only subgroups lying in two distinct central circles are the trivial group and the centre. 
\end{lemma}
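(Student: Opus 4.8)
The plan is to treat the two assertions separately, reducing each to the characterisation recorded in Section~\ref{subsec:singsub}: a non-singular element lies in a \emph{unique} maximal torus. Conjugation permutes maximal tori, so non-singularity is a conjugation-invariant property, and the unique torus through a non-singular element is carried to the unique torus through its conjugate.

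For the first assertion I would argue by contradiction. Suppose $A$ contains a non-singular element $x$. Writing the superscript for conjugation as in Section~\ref{subsec:singsub}, from $A = B^a$ we get $B = A^{a^{-1}}$, so $x^{a^{-1}} \in B \subseteq \T$, and $x^{a^{-1}}$ is again non-singular. Since $x$ is non-singular, $\T$ is the only maximal torus containing $x$; conjugating by $a^{-1}$, the only maximal torus containing $x^{a^{-1}}$ is $\T^{a^{-1}}$. But $x^{a^{-1}} \in \T$ is itself non-singular, so $\T$ is also the unique maximal torus containing it. Hence $\T^{a^{-1}} = \T$, i.e. $a \in N_G(\T)$, contradicting the hypothesis $a \notin N_G(\T)$. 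Therefore every element of $A$ is singular, and Lemma~\ref{lem:singsub} forces $A$ to lie in a single root kernel $U_\alpha$, which is a singular circle.

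For the second assertion I would identify the central circles explicitly as the three root kernels $U_{12}, U_{13}, U_{23}$, i.e. the loci $\{z_i = z_j\}$ inside $\T = \{\diag(z_1,z_2,z_3) : z_1z_2z_3 = 1\}$ (equivalently, the centres of the three block copies of $U(2)$). For two distinct such circles the defining equations impose two independent coincidences among the $z_i$, hence $z_1 = z_2 = z_3$; together with $z_1z_2z_3 = 1$ this gives $z_1 \in \mu_3$, so the intersection is exactly the centre $Z = \langle \diag(\omega,\omega,\omega)\rangle \cong C_3$. A subgroup lying in two distinct central circles is therefore a subgroup of $C_3$, and since $C_3$ has prime order its only subgroups are the trivial group and $Z$ itself.

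The second part is a routine coincidence computation. The only step needing care is the first part, where one must invoke the \emph{uniqueness} of the maximal torus through a non-singular element; note that the first lemma of this subsection does not apply directly, since the transporter $a$ need only carry $B$ to $A$ and need not normalise $A$. Once uniqueness is in hand, the conjugation-invariance of the ambient torus does all the work, so I expect no genuine obstacle beyond this bookkeeping.
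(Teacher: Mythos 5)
Your proof is correct, and for the first assertion it takes a genuinely different route from the paper's. The paper argues by direct linear algebra: it writes $\T$ and $\T^a$ as the diagonal tori for two orthonormal frames $E=(e_1,e_2,e_3)$ and $F=(f_1,f_2,f_3)$, expands each $f_k$ in the basis $E$, and uses the fact that every $t\in A\subseteq \T\cap\T^a$ has the $f_k$ as eigenvectors to force coincidences among the eigenvalues of $t$; the case analysis shows each $t$ is central, lies in a singular circle, or else the two frames (hence the two tori) agree, contradicting $a\notin N_G(\T)$. You instead invoke the uniqueness of the maximal torus through a non-singular element: conjugation carries the unique torus through $x$ to the unique torus through its conjugate, forcing $\T^{a^{-1}}=\T$, and you then appeal to Lemma~\ref{lem:singsub} to pass from ``every element of $A$ is singular'' to ``$A$ lies in a single kernel $U_\alpha$''. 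Your version is cleaner and actually makes explicit a step the paper elides --- its elementwise computation shows each $t$ is singular but does not say why all of $A$ sits in the \emph{same} circle, which is exactly what Lemma~\ref{lem:singsub} supplies; the trade-off is that you lean on that lemma (stated in the paper as ``easily verified'' without proof), whereas the paper's frame computation is self-contained and delivers the central/singular dichotomy directly. You are also right that the first lemma of the subsection does not apply verbatim, since $a$ is only a transporter. For the second assertion the paper gives no explicit argument at all; your identification of the central circles with the loci $\{z_i=z_j\}$ and the computation that two distinct ones intersect in the centre $C_3$, whose only subgroups are trivial or all of $C_3$, is the correct and complete justification.
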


\begin{proof}
Each maximal torus consists of the diagonal matrices for some 
orthonormal frame. Suppose $\T $ corresponds to the standard frame 
$E=(e_1, e_2, e_3)$ and $\T^a$ corresponds to $F=(f_1, f_2, f_3)$. Suppose 
$t$ has eigenvalues $(\lambda_1, \lambda_2, \lambda_3)$ for $E$. 

If we write $f_1=ae_1+be_2+ce_3$ then if $abc\neq 0 $, since $f_1$ is an eigenvalue of $t$, 
we have $\lambda_1=\lambda_2=\lambda_3$ and $t$ is central. If two of 
$a,b$ and $c$ are non-zero then the corresponding eigenvalues are equal and $t$ lies in 
a singular circle. The only remaining alternative that $f_1$ is a 
multiple of one of the $e_i$. Arguing similarly with $f_2, f_3$ we see 
that either $t$ is in a central circle or else the tori are equal, 
contradicting the assumption. 
\end{proof}

\subsection{Subgroups represented in $U(2)$}
\label{subsec:subutwo}
We are now equipped to consider the fusion on passage from $U(2)$ to
$SU(3)$. We show that in fact  there is there is 
very little new fusion. 

\begin{lemma}
  \label{lem:u2su3fusion}
The map $\sub(U(2))/U(2)\lra \sub(SU(3))/SU(3)$ is injective except for 
conjugacy classes dominated by the maximal torus, or the maximal
torus normalizer. In these cases the increase 
in Weyl group from $C_2$ to $\Sigma_3$  fuses subgroups in an easily 
understood way. 
\end{lemma}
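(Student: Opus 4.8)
The plan is to analyze the map $\sub(U(2))/U(2)\lra \sub(SU(3))/SU(3)$ block by block, using the partition of $\fX_{U(2)}$ into its seven blocks recorded in Section \ref{sec:proper}. For each block, I would ask whether two distinct $U(2)$-conjugacy classes $(K)_{U(2)}$ and $(K')_{U(2)}$ can become equal in $SU(3)$, i.e.\ whether there exists $g\in SU(3)$ with $K'=K^g$ but no such $g$ inside $U(2)$. The main tool is Lemma \ref{lem:u2su3fusion}'s predecessors: Lemma \ref{lem:central} tells us that if $a\notin N_G(\T)$ conjugates one subgroup of $\T$ to another, then the source lies in a singular circle, and the only subgroups lying in two distinct central circles are trivial or central. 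Combined with the first unnumbered lemma ($N_G(S)\subseteq N_G(\T)$ when $S$ contains a non-singular element), this should force almost all conjugating elements to normalize $\T$, and hence to realize the fusion already visible through the Weyl-group increase.

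First I would dispose of the five blocks dominated by finite-Weyl-group subgroups (those not dominated by $T^2$ or its normalizer). For a subgroup $K$ in one of these blocks, $K$ has a finite Weyl group in $U(2)$ and, crucially, contains non-singular elements or is otherwise pinned down by its identity component; I would invoke the normalizer lemma to show $N_{SU(3)}(K)\subseteq N_{SU(3)}(\T)$, so that any $SU(3)$-conjugacy is detected inside $N_{SU(3)}(\T)$, whose action on the relevant subgroups factors through the Weyl group $\Sigma_3$. The point is that for these blocks the geometry of $\C^3=\C^2\oplus\C$ is rigid enough that the extra Weyl elements in $\Sigma_3\setminus C_2$ move $K$ out of $U(2)$ entirely, so they cannot produce new identifications \emph{within} the image; this gives injectivity on the nose for those blocks.

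Next I would treat the two exceptional blocks, dominated by $T^2$ (the maximal torus of $U(2)$, which is $\T$) and by its normalizer. Here $W_{SU(3)}(\T)=\Sigma_3$ genuinely strictly contains $W_{U(2)}(\T)=C_2$, so the extra coset representatives in $\Sigma_3$ permute subgroups of $\T$ nontrivially and do cause fusion. I would make this explicit by describing the $\Sigma_3$-action on $\sub(\T)$ as the symmetric-group action permuting the three coordinate slots $(z_1,z_2,z_3)$ with $z_1z_2z_3=1$, and observing that the $U(2)=C_2$-orbits (swapping the first two slots) refine the $\Sigma_3$-orbits; the fusion is then read off as the quotient of the $C_2$-orbit space by the residual $\Sigma_3/C_2$-action, giving the ``easily understood'' identification the statement promises. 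Lemma \ref{lem:central} guarantees no \emph{further} fusion beyond what $N_{SU(3)}(\T)$ sees, since any conjugating $a\notin N_{SU(3)}(\T)$ would force the subgroup into a singular or central circle, cases handled separately and already accounted for.

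The main obstacle I anticipate is the careful bookkeeping at the boundary between blocks and the treatment of singular subgroups, where Lemma \ref{lem:central} leaves open the possibility of conjugation by elements outside $N_{SU(3)}(\T)$. Specifically, subgroups lying in a singular circle $U_\alpha$ sit at the interface where the clean normalizer argument fails, and I would need to check directly—using the $\C^2\oplus\C$ decomposition and the classification of singular subgroups in Lemma \ref{lem:singsub}—that such potential extra conjugations either already lie in the $T^2$/normalizer blocks (where fusion is expected) or are ruled out by the central-circle dichotomy. Verifying that no stray fusion leaks between the two-dimensional blocks and the lower-dimensional ones is the delicate part; everything else reduces to the Weyl-group combinatorics of $C_2\subseteq\Sigma_3$ acting on $\sub(\T)$.
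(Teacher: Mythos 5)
Your treatment of the two blocks dominated by the maximal torus and its normalizer is broadly in the spirit of the paper (conjugation either happens inside $N_{SU(3)}(\T)$, where it is the visible $C_2\subseteq\Sigma_3$ Weyl fusion, or forces the subgroup into a singular circle, which is then handled by the conjugacy of the three singular circles and by Lemma \ref{lem:normfull} for full subgroups). But there is a genuine gap in your argument for the other five blocks. The subgroups in the blocks dominated by $U(2)$ and by $\tilde{A}\times_{C_2}Z$ for $A\in\{A_5,\Sigma_4,A_4,D_4\}$ are \emph{not} subgroups of $\T$ --- they are nonabelian, containing $\tilde{A}$ --- so the lemma asserting $N_G(S)\subseteq N_G(\T)$ for $S\subseteq\T$ containing a non-singular element simply does not apply to them, and your claim that $N_{SU(3)}(K)\subseteq N_{SU(3)}(\T)$ for such $K$ is false. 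Moreover, even where normalizer containments are available they do not decide the question at hand: injectivity of $\sub(U(2))/U(2)\to\sub(SU(3))/SU(3)$ is about whether some $g\in SU(3)$ carries $K$ onto a subgroup $U(2)$-conjugate to a \emph{different} $K'$, and knowing $N_{SU(3)}(K)$ (which controls the Weyl group, not fusion between distinct classes) does not rule this out. Your remark that the extra Weyl elements ``move $K$ out of $U(2)$ entirely, so they cannot produce new identifications within the image'' is not a valid inference for the same reason.

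The paper's argument for these five blocks is different and much shorter: within each such block, two subgroups of $U(2)$ that are abstractly isomorphic are already conjugate in $U(2)$, so an $SU(3)$-conjugation between two members of the block forces them to lie in the same $U(2)$-class and no new fusion can occur. (If you wanted an argument closer in flavour to yours, the viable route is the one the paper uses later for Weyl groups: the central involution $-I$ generates a characteristic subgroup of every $K$ in these blocks, so any conjugating element centralizes $-I$ and hence lies in $C_{SU(3)}(-I)=U(2)$.) You should replace the normalizer argument for these blocks by one of these two arguments, and flesh out the torus-normalizer block by engaging with the full subgroups $H(A,\sigma)$ as in Lemma \ref{lem:normfull} rather than treating that block as if it consisted of subgroups of $\T$.
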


\begin{proof}
 We have seen that $\fX_{U(2)} $ has 7 blocks. The images of 
 different blocks remain separate in 
 $\fX_{SU(3)}$, so we may consider them in turn. If $\cV$ is one 
 of the 5 blocks corresponding to  exceptional subgroups of 
 $SO(3)$ two subgroups which are isomorphic are already 
 conjugate. This proves injectivity on $\cV$. 

For the remainder we need to understand conjugacy of subgroups of the
maximal torus.  First, note that if $A, B\subseteq \T$ and $B^a=A$ then $A\subseteq \T\cap
\T^{a}$. If $\T^a=\T$ then $a\in N_G(\T)$ and the fusion is
standard. Otherwise $\T\cap \T^a$ is a proper subgroup of $\T$.
\end{proof}

Since subgroups of the circle are classified by their order and since
the three singular circles are conjugate, it follows that for the block of the maximal torus the only 
fusion is due to the enlargement of the Weyl group from $C_2$ to $\Sigma_3$. 

\begin{cor}
If $A,B\subseteq \T$ are conjugate in $SU(3)$ then $B^a$ is conjugate 
to $A$ in $U(2)$ where $a$ is a permutation matrix of order 1 or 3. 
\end{cor}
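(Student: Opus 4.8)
The plan is to track an explicit conjugating element and to separate the two sources of $SU(3)$-conjugacy between subgroups of $\T$: honest Weyl elements, and the ``non-toral'' conjugations controlled by singular circles. Write $A=B^g$ with $g\in SU(3)$. First I would split into the case $g\in N_G(\T)$ and the case $g\notin N_G(\T)$; in the latter case Lemma \ref{lem:central} forces $A$ to lie in a singular circle, and hence (since $N_G(\T)$ is closed under inverses, so $g^{-1}\notin N_G(\T)$ as well) $B$ lies in one too.

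Treat the normalizer case first. Here $g$ represents a Weyl element $w\in W_G(\T)=\Sigma_3$, acting on subgroups of $\T$ by permuting diagonal entries, while $W_{U(2)}(\T)$ is the subgroup $C_2=\langle (12)\rangle$ generated by the coordinate swap of $z_1$ and $z_2$. The key algebraic point is that the determinant-one permutation matrices form the cyclic subgroup $C_3=\{I,(123),(132)\}\subseteq\Sigma_3$ (a transposition has determinant $-1$ and so is excluded), while $C_3\cap C_2=1$; counting orders then gives the factorization $\Sigma_3=C_3\cdot C_2$. Writing $w=ac$ with $a\in C_3$ and $c\in C_2$, and using that $\T$ acts trivially on its own subgroups, I get $A=B^w=(B^a)^c$. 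Since $c$ lifts to $N_{U(2)}(\T)\subseteq U(2)$, this exhibits $B^a$ as $U(2)$-conjugate to $A$ with $a$ a permutation matrix of order $1$ or $3$. This is exactly the statement that the only new fusion comes from enlarging the Weyl group from $C_2$ to $\Sigma_3$.

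For the singular case I would argue geometrically. If $A$ is central it is $SU(3)$-invariant, so $A=B$ and $a=I$ works; otherwise Lemma \ref{lem:central} shows that $A$ and $B$ each lie in a \emph{unique} singular circle, say $U_\alpha\ni A$ and $U_\beta\ni B$. The three singular circles $\{z_i=z_j\}$ are cyclically permuted by $C_3$, and this action is simply transitive, so there is a unique $a\in C_3$ with $U_\beta^a=U_\alpha$. Then $B^a$ is a subgroup of the circle $U_\alpha$ of the same order as $B$, hence as $A$; since subgroups of a circle are classified by their order, $B^a=A$, which is in particular $U(2)$-conjugate to $A$. I expect the main obstacle to be precisely this non-normalizer case: one must verify that conjugations realized by elements outside $N_G(\T)$ introduce no fusion beyond that already captured by the $3$-cycles, and it is exactly the interplay between Lemma \ref{lem:central} (the geometry of singular circles) and the simple transitivity of the $C_3$-action that supplies this. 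The normalizer case, by contrast, is a routine coset computation once the factorization $\Sigma_3=C_3\cdot C_2$ is in hand.
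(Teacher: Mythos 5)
Your proposal is correct and follows essentially the same route as the paper: the paper also splits according to whether the conjugating element normalizes $\T$, invokes Lemma \ref{lem:central} to force the singular-circle case otherwise, and then uses exactly the two facts it states before the corollary (subgroups of a circle are classified by their order, and the three singular circles are permuted by the order-3 permutation matrices), with the normalizer case reduced to the Weyl-group enlargement $C_2\subseteq\Sigma_3$. Your write-up merely makes explicit the factorization $\Sigma_3=C_3\cdot C_2$ and the simply transitive $C_3$-action on singular circles, details the paper leaves implicit.
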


Finally, we consider the block of the maximal torus normalizer. The claim that there
is no new fusion is covered by the following. First we recall that any
full subgroup of the maximal torus normalizer in $U(2)$ is of the form
$H(A, \sigma)=\langle H, \sigma\rangle$,  where $A\subseteq \T$ and
$\sigma$ maps to the generator of the Weyl group. 

\begin{lemma}
  \label{lem:normfull}
Suppose given two full subgroups $H(A,\sigma)$, $H(B,\tau)$ of the
maximal torus normalizer. If $H(A,\sigma)=H(B,\tau)^a$ with $a\in
SU(3)$ then (i) $A=B^a$ (ii) $A=B^c$ where $c$ is a permutation matrix
of order 1 or 3 and (iii) $H(A,\sigma)=H(B, \tau)^d$ with $d\in U(2)$.  
\end{lemma}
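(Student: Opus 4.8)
The plan is to reduce everything to the single statement that the conjugating element $a$ may be taken to normalize $\T$; once $a\in N_G(\T)$, all three parts fall out of Weyl-group combinatorics. Write $H=H(A,\sigma)$, and recall $A=H\cap\T$ while $B=H(B,\tau)\cap\T$, both subgroups of $\T$, each normal of index $2$ in its full subgroup. Since $H(B,\tau)\subseteq N_G(\T)$, its conjugate $H=H(B,\tau)^a$ lies in $N_G(\T^a)$; as also $H\subseteq N_{U(2)}(\T)\subseteq N_G(\T)$, the subgroup $H$ normalizes both $\T$ and $\T^a$, and $B^a=H\cap\T^a$.

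First I would prove $\T=\T^a$ (equivalently $a\in N_G(\T)$), which gives (i) at once, since then $A=H\cap\T=H\cap\T^a=B^a$. Suppose $\T\neq\T^a$. Every element of $\T\cap\T^a$ lies in two distinct maximal tori, hence is singular, so by Lemma \ref{lem:singsub} the group $\T\cap\T^a$ lies in a single root kernel $U_\alpha$ and has rank at most $1$. Because $N_G(\T^a)$ has identity component $\T^a$, the connected group $H_e=A_e$ satisfies $A_e\subseteq\T\cap\T^a\subseteq U_\alpha$. Moreover $A$ and $B^a$ are index-$2$ normal subgroups of $H$, so if $A\neq B^a$ then $AB^a=H$ and $[A:A\cap B^a]=2$; but $A\cap B^a\subseteq\T\cap\T^a\subseteq U_\alpha$, forcing $A$ itself to have rank at most $1$ with $A_e\subseteq U_\alpha$. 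When $\dim A\geq 1$ this is impossible, since the toral parts occurring in the torus-normalizer block are non-singular (their identity components are regular subtori, not contained in any $U_\alpha$); hence $\T=\T^a$. Lemma \ref{lem:central} gives the same conclusion once $A=B^a$ is known to be non-singular. The remaining case $\dim A=0$ is discussed below.

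Granting $a\in N_G(\T)$, let $w=\bar a\in\Sigma_3=W_{SU(3)}(\T)$. Since $H(A,\sigma)$ and $H(B,\tau)$ are full subgroups of the $U(2)$ torus normalizer, both $\bar\sigma$ and $\bar\tau$ equal the generating transposition $s$ of $W_{U(2)}(\T)=\langle s\rangle$. From $a\sigma a^{-1}\in H(B,\tau)$ one reads off $wsw^{-1}=s$ in $\Sigma_3$, so $w\in C_{\Sigma_3}(s)=\langle s\rangle=W_{U(2)}(\T)$. As $N_{U(2)}(\T)$ is exactly the preimage of $\langle s\rangle$ under $N_{SU(3)}(\T)\to\Sigma_3$, this places $a\in N_{U(2)}(\T)\subseteq U(2)$, so (iii) holds with $d=a$. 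For (ii), the identity $A=B^a$ exhibits $A$ and $B$ as $SU(3)$-conjugate subgroups of $\T$, so the preceding Corollary supplies the permutation matrix $c$ of order $1$ or $3$; and since we have just seen $w\in W_{U(2)}(\T)$, the fusion is realized inside $U(2)$, so one may take $c=1$.

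The main obstacle is the reduction $a\in N_G(\T)$, and within it the genuinely $0$-dimensional case $A=1$, where $H=\langle\sigma\rangle$ is a single reflection and the index argument degenerates. There one must argue directly that conjugate order-$2$ reflections of the $U(2)$ torus normalizer are already $U(2)$-conjugate, using that the transposition-type reflections of $SU(3)$ form one conjugacy class and that the conjugacy can be realized by a permutation matrix fixing the third coordinate. A secondary point needing care is the verification that the toral parts $A$ arising in this block are non-singular, so that $A_e\subseteq U_\alpha$ is a genuine contradiction; this is precisely where Lemmas \ref{lem:singsub} and \ref{lem:central}, together with the explicit description of the block, do the real work.
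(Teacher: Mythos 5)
There is a genuine gap, and it sits exactly where you flagged it as a ``secondary point'': your entire argument is routed through the claim that $\T=\T^a$, and that claim is false in the cases the lemma most needs to cover. The full subgroups $H(A,\sigma)$ of $N_{U(2)}(\T)$ include those whose toral part $A$ lies in the central circle $Z=\{\diag(\lambda,\lambda,\lambda^{-2})\}$ of $U(2)$ --- and $Z$ is precisely the root kernel $U_{\alpha}$ for $z_1=z_2$, i.e.\ a \emph{singular} circle. So your assertion that ``the toral parts occurring in the torus-normalizer block are non-singular'' is not a detail to be checked later; it is wrong, and with it the step ``$A_e\subseteq U_\alpha$ is a contradiction when $\dim A\geq 1$'' collapses. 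Concretely, for $H(Z,\sigma)$ the normalizer in $U(2)$ is the preimage of an $O(2)$ in $SO(3)$ whose circle is \emph{not} the image of $\T$, so there really are elements $a$ with $H(B,\tau)^a=H(A,\sigma)$ and $\T^a\neq\T$; the conclusion of the lemma still holds for them, but not via your reduction. The paper's proof is organized around exactly this dichotomy (via Lemma \ref{lem:central}): it first proves $A=B^a$ by showing $A$ is \emph{characteristic} in $H(A,\sigma)$ when $\sigma$ acts nontrivially on $A$ (the centralizers $C_H(x)$ distinguish $A$ from its complement), and then treats the remaining case $A=A^W\subseteq Z$ by identifying $A$ explicitly as the scalars on a summand of $\C^3=V_1\oplus V_2$, which pins down $A$ inside $H$ without any appeal to $\T$. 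Your proof has no substitute for either of these steps.

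Two smaller points. First, you explicitly leave the $\dim A=0$ case (and in particular $A=1$, $H=\langle\sigma\rangle$) as an unresolved ``obstacle''; since finite $A$ contained in $Z$ genuinely occur, this is part of the same missing case, not a degenerate footnote. Second, the parts of your argument that do work --- $A=H\cap\T=H\cap\T^a=B^a$ once $\T=\T^a$, and the deduction that $w\in N_{\Sigma_3}(\langle s\rangle)=\langle s\rangle$ forces $a\in N_{U(2)}(\T)$, giving (ii) and (iii) --- are clean and slightly tidier than the paper's remark about $(12)$-invariant lattices not being $(123)$-invariant. If you supplement your argument with the paper's characteristic-subgroup/centralizer step (or any intrinsic identification of $A$ inside $H$ valid when $A\subseteq Z$), the rest of your write-up would go through.
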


\begin{proof}
We will argue below that if $H(A,\sigma)=H(B, \tau)^a$ then in fact
$A=B^a$.  From Lemma \ref{lem:central}  either $a\in N_G(\T)$ or else $A$ lies in a central circle. 
If $a\in N_G(\T)$ then the fusion is just what is expected from
passing from subgroups of $W_{U(2)}(\T)=C_2$ to
$N_{SU(3)}(\T)=\Sigma_3$.  However there is in fact no instance of a
subgroup $H(A,\sigma)$ so that a permutation matrix of order 3
preserves $A$: no rank 2 lattices $(12)$-invariant lattices of
$\Lambda^0$ are also invariant under $(123)$.

Finally we argue that if $A$ lies in a
central circle $H(A, \sigma)=H(A,\tau^a)$ implies conjugacy in $U(2)$.

In our case each of the full groups $H(A, \sigma)$ has $A$ as a
subgroup of index 2. If $A$ is not fixed by $\sigma$ then  is the nonidentity
block is determined by group theoretic properties. Indeed $C_H(x)$
is either (i) $H$ if $x\in A^W$ or (ii) $A$ if $x\in A\setminus A^W$
or (iii) $A^W$ if $x\not \in A$. 
Thus if  $H(B,\tau)^a=H(B^a, \tau^a)=H(A,\sigma)$ it follows  that
$B^a=A$. 

Finally if $A=A^W$ then $A\cap U(2)$ consists of scalar matrices, so
$A$ is generated by $\diag (\lambda, \lambda, \lambda^{-2})$ for some
$\lambda$. If $A$ is central then $A$ is of order 1 or 3, so $A=B^a$. 
Otherwise $A$ determines an orthogonal decomposition of $\C^3=V_1\oplus
V_2$, and hence $A$ is determined as the set of elements acting as a
scalar on $V_2$. Once again $A=B^a$.
\end{proof}

\section{New subgroups of $SU(3)$}
\label{sec:newsu3subgps}

We need to describe the subgroups not represented in $U(2)$. In
principle it might happen that such a subgroup $H$ might have a
cotoral subgroup inside $U(2)$, but this does not in fact happen.

\begin{prop}
There are 11 conjugacy classes of subgroups of $SU(3)$ 
not conjugate to subgroups of $U(2)$.

(i) $SU(3)$ itself

(ii) subgroups of local type $T\times T$ and block group
$\Sigma_3$ 
or $C_3$, together with their full subgroups. 

(iii) subgroups of local type $SU(2)$ isomorphic to
$SO(3)$ and

 (iv) subgroups of local type $1$, where there are 7 additional
 conjugacy classes of finite subgroups.
\end{prop}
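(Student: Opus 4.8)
The plan is to stratify the subgroups $H$ of $SU(3)$ by the conjugacy class of their identity component $H_e$ (their local type), since $H$ can be conjugate into $U(2)$ only if $H_e$ is, and then to analyse the finitely many component extensions allowed for each $H_e$. The connected subgroups of $SU(3)$ up to conjugacy are $1$, the circles, the maximal torus $\T$, the block $SU(2)\subseteq U(2)$, the symmetric $SO(3)$ (the image of $SU(2)$ under its $3$-dimensional irreducible representation, equivalently the real points $SU(3)\cap O(3)$), $U(2)$, and $SU(3)$. Every circle and every maximal torus lie in $U(2)$, and the block $SU(2)$ and $U(2)$ are by definition in $U(2)$; so the only connected subgroups not conjugate into $U(2)$ are the symmetric $SO(3)$ and $SU(3)$ itself. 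This pins down the possible local types of a new subgroup, and the remaining work is to control the component groups.

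For the positive-dimensional local types I would argue by computing normalizers, using \S\ref{subsec:singsub}. For $H_e=SU(3)$ the only subgroup is $SU(3)$, giving (i). For $H_e$ a circle $C$, Lemma \ref{lem:singsub} gives a dichotomy: either $C$ meets the non-singular locus, in which case the normaliser bound of \S\ref{subsec:singsub} gives $N_G(C)\subseteq N_G(\T)$ and the image of $H$ in $W=\Sigma_3$ stabilises the rational line of $C$; since a $3$-cycle acts $\Q$-irreducibly on $\Lambda_0^{\Q}$ (as used in the proof of Lemma \ref{lem:normfull}) this image lies in a reflection $C_2$, which is realised inside $U(2)$ up to conjugacy; or else $C$ is entirely singular, hence conjugate to the centre $Z=U_{e_1-e_2}$ of $U(2)$, and a direct eigenspace-dimension argument shows no element of $SU(3)$ inverts $Z$ (inversion would fix the splitting $\C^2\oplus\C$ yet conjugate the central scalar $\lambda I_2$ to $\lambda^{-1}I_2$), whence $N_G(Z)=C_G(Z)=U(2)$. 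Either way $H$ lies in $U(2)$. The same centraliser bookkeeping disposes of the other positive-dimensional cases: $C_G(\text{block }SU(2))=Z$ and $Z(U(2))=Z$, so $N_G(\text{block }SU(2))$ and $N_G(U(2))$ are both contained in $N_G(Z)=U(2)$ and produce nothing new. For $H_e=\T$ one has $\T\subseteq H\subseteq N_G(\T)$ with $H/\T\le\Sigma_3$; the classes $H/\T\in\{1,C_2\}$ are the torus and torus-normalizer subgroups already present in $U(2)$ (Lemma \ref{lem:u2su3fusion} and its corollary), while $H/\T\supseteq C_3$ gives exactly the two dominant subgroups $(\T,C_3)$ and $(\T,\Sigma_3)=N_G(\T)$ of (ii), whose blocks contain all the full subgroups $H(A,-)$ with $A\subseteq\T$ having a $3$-cycle in the component group. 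Finally, since $SO(3)$ is $\C^3$-irreducible, $C_G(SO(3))=Z(SU(3))=C_3$ and $\mathrm{Out}(SO(3))=1$, so $N_G(SO(3))=SO(3)\cdot C_3$, yielding the local type of (iii).

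For the finite case $H_e=1$ I would use the classical trichotomy for a finite $H\subseteq SU(3)$ according to its action on $\C^3$. If $H$ fixes a line it preserves a splitting $\C^2\oplus\C$ and so lies in the block $U(2)$; if $H$ is irreducible but imprimitive (monomial) it lies in $N_G(\T)$ and, acting transitively on the three coordinate lines, has a $3$-cycle in its image in $\Sigma_3$, so it is one of the full subgroups already accounted for by the type (ii) blocks; and if $H$ is primitive it is one of the exceptional finite subgroups of $SU(3)$ from the Blichfeldt–Miller–Dickson classification ($A_5$, $PSL(2,7)$, the Valentiner group $3\cdot A_6$, and the Hessian group with its normal subgroups). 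Enumerating these and recording for each whether it contains the central $C_3=Z(SU(3))$ yields the seven classes of (iv).

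The main obstacle is the finite primitive classification combined with the bookkeeping of the central subgroup $C_3=Z(SU(3))$. Because $Z(SU(3))$ normalises every subgroup, each primitive group $\Gamma$ with $\Gamma\cap Z=1$ is shadowed by the distinct subgroup $\Gamma\cdot Z$, and the same phenomenon attaches $SO(3)\cdot C_3$ to the $SO(3)$ of (iii); deciding precisely how these central extensions split between genuinely separate conjugacy classes and the $W_G^d$-component structure of a single block is what fixes the individual counts and assembles them to the total $11=1+2+1+7$. I would also need to confirm that there is no further fusion among the new classes beyond that recorded in \S\ref{sec:oldsu3subgps}, which reduces to the orthonormal-frame eigenvector arguments of Lemma \ref{lem:central}.
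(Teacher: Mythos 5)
Your proposal follows essentially the same route as the paper: stratify subgroups by the conjugacy class of the identity component, enumerate the connected subgroups by representation theory, control the component group via normalizer computations resting on the singular/non-singular dichotomy of \S\ref{subsec:singsub}, and defer the finite primitive case to the Blichfeldt--Miller--Dickson classification. The one substantive divergence is your computation $N_G(SO(3))=SO(3)\cdot Z(SU(3))$: the paper asserts $N_G(SO(3))=SO(3)$, but your version is the correct one, since the central $C_3=Z(SU(3))$ normalizes every subgroup and does not lie in the real form $SO(3)$; consequently $W_G(SO(3))\cong C_3$ and $SO(3)\times C_3$ is a further class of subgroups not conjugate into $U(2)$, so the central-extension bookkeeping you flag at the end is a genuine issue that the paper's own count of $11$ does not appear to resolve.
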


In Subsection \ref{subsec:su3notu2} we will deduce this from the classical
Miller-Blichfeldt-Dickson classification of finite subgroups of
$SU(3)$. Theorem \ref{thm:su3blocks} gives a summary of the analysis.


\subsection{The strategy for classifying subgroups}
\newcommand{\fh}{\mathfrak{h}}
The general proof is constructive and we simply
follow it through in this case. It may be easier to follow if we describe the
general process first.

\begin{itemize}
\item {\bf Step 1:} Enumerate the local types $\fh$ that can
  occur. This is routine from the classification of Lie algebras. 

\item {\bf Step 2:} For each local type $\fh$, find which connected
  groups $H_e$ of local type $\fh$ can occur as subgroups of $G$, 
and classify occurrences up to conjugacy. If $G=U(n)$,
this is just a matter of counting isomorphism classes of
$n$-dimensional representations of $H_e$.  The well-developed
apparatus of representation theory makes this routine.

\item {\bf Step 3:} For each conjugacy class of connected subgroup 
$H_e$, identify conjugacy classes of subgroups $H$ with identity
component $H_e$. Since $H_e$ is normal in $H$ and  $N_G(H)\subseteq N_G(H_e)$ we may work 
inside $W_G(H_e)$. Accordingly, the problem is  precisely to classify
finite subgroups of $W_G(H_e)$ up to conjugacy. This can be  hard, but for 
small groups the classification is known. 
\end{itemize}
This process reduces us to consideration of the finite subgroups of
$W_G(H_e)$ for connected subgroups $H_e$. This includes finite
subgroups of $G$ (corresponding to $H_e=1$) but for $H_e\neq 1$ 
$W_G(H_e)$ is of smaller dimension than $G$.

We are left with a list of pairs $(H_e, F)$ with $H_e$ a connected
subgroup of $G$ and $F$ a finite subgroup of $W_G(H_e)$; the
associated subgroup $H$ is the inverse image of $F$ in $N_G(H_e)$.
Assuming that $\sub(G')/G'$ has already been identified for subgroups
$G'$ of $G$, we need only consider the maximal subgroups $(H_e,F)$.

\subsection{The strategy for decomposing into blocks}
We have described how to to identify all conjugacy classes of
subgroups, but what we really want to do is to decompose $\sub(G)/G$
into blocks. 

Starting with the largest groups, we allocate subgroups to
blocks. Suppose that the process has been started,  and that all
subgroups of higher rank  than the connected subgroup 
$H_e$ have been allocated to a block.

\begin{lemma}
Suppose $H_e$ is a  connected subgroup of $G$ and  write $W=W_G(H_e)$.
 Subgroups $H$ not included in previous blocks with identity component $H_e$
correspond to finite subgroups $F\subseteq W$ with $N_{W}(F)$ finite.
\end{lemma}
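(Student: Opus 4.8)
The plan is to translate everything through the quotient $\pi\colon N_G(H_e)\lra W=N_G(H_e)/H_e$ and reduce the assertion to the single equivalence that ``$H$ has not been allocated to an earlier block'' is the same as ``$W_G(H)$ is finite.'' Since $H_e$ is the identity component of $H$ it is characteristic, so $N_G(H)\subseteq N_G(H_e)$ and $F=H/H_e$ is a finite subgroup of $W$, with $H=\pi^{-1}(F)$; this is the correspondence of Step 3. The point I would then establish is that $\pi$ carries the normalizer of $H$ to the normalizer of $F$: for $g\in N_G(H_e)$ one has $gHg^{-1}=H$ if and only if $\pi(g)F\pi(g)^{-1}=F$, because both $H$ and $gHg^{-1}$ are the full $\pi$-preimages of their images in $W$. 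Hence $N_G(H)=\pi^{-1}(N_W(F))$, and dividing by $H=\pi^{-1}(F)$ gives
$$W_G(H)=N_G(H)/H=N_W(F)/F.$$
As $F$ is finite, $W_G(H)$ is finite precisely when $N_W(F)$ is finite, so it remains only to match this finiteness with the condition of not lying in a previously constructed block.

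For that matching I would first prove the intrinsic criterion that $W_G(H)$ is infinite exactly when $H$ admits a cotoral overgroup of strictly larger dimension. If $H$ is normal in some $H'$ with $H'/H$ a nontrivial torus $S$, then $H'\subseteq N_G(H)$ maps $S$ isomorphically into $W_G(H)$, which is therefore infinite. Conversely, if $W_G(H)$ is infinite then its identity component $W_G^e(H)$ is a positive-dimensional compact connected Lie group and so contains a nontrivial torus; pulling this torus back along $N_G(H)\lra W_G(H)$ produces a subgroup $H'$ in which $H$ is cotoral, with $\dim H'_e>\dim H_e$. Thus a subgroup of finite Weyl group is exactly one that is not cotoral in any subgroup of strictly larger dimension.

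Finally I would connect this to the allocation. The blocks are processed in order of decreasing dimension of the dominating subgroup, and each is the cotoral–specialization closure $\Lambda_{ct}(\cN^G_{H''})$ of a neighbourhood of a subgroup $H''$ with $W_G(H'')$ finite. If $W_G(H)$ is infinite, the previous paragraph supplies a cotoral overgroup of strictly larger dimension; enlarging it to a maximal cotoral overgroup $H'$ (which exists since the cotoral order is a partial order of bounded length) produces one with finite Weyl group, hence a dominating subgroup of an earlier, higher-dimensional block whose cotoral closure already contains $H$, so $H$ has been allocated. Conversely, when $W_G(H)$ is finite, $H$ is cotoral in no strictly larger subgroup, so it cannot be reached by cotoral specialization from any higher-dimensional neighbourhood. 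Combining, the unallocated subgroups with identity component $H_e$ are exactly those with $W_G(H)$, equivalently $N_W(F)$, finite.

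The delicate point, and the step I expect to be the main obstacle, is this last matching: I must ensure that a finite–Weyl–group subgroup with identity component $H_e$ is not inadvertently absorbed into a higher block by some route other than cotoral specialization. Cotoral specialization causes no trouble, but such a subgroup could in principle enter a higher block by lying in the chosen neighbourhood of a larger dominating subgroup to which it converges in the $h$-topology; ruling this out requires using that the neighbourhoods $\cN^G_{H''}$ are taken within the subgroups of finite Weyl group and are compatible with the dimension stratification, so that a finite–Weyl–group subgroup with identity component $H_e$ genuinely belongs to the stage of dimension $\dim H_e$ rather than to an earlier one.
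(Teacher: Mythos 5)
Your first three paragraphs are correct and follow essentially the same route as the paper, which compresses the whole argument into three sentences: subgroups with identity component $H_e$ correspond to finite $F\subseteq W$; $N_G(H)/H_e=N_W(F)$, so $W_G(H)=N_W(F)/F$ is finite exactly when $N_W(F)$ is; and if $W_G(H)$ is infinite then $H$ is cotoral in a previously considered group. Your explicit verification that $N_G(H)=\pi^{-1}(N_W(F))$ and your intrinsic characterization (infinite Weyl group $\Leftrightarrow$ existence of a cotoral overgroup of strictly larger dimension, obtained by pulling back a circle in $W_G^e(H)$) are exactly the details the paper leaves implicit, and they are sound.

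Where you go beyond the paper is in trying to prove the converse — that a subgroup with finite Weyl group and identity component $H_e$ cannot already lie in an earlier block — and your final paragraph asserts this can be ruled out. It cannot: the converse is false, and the lemma does not claim it. The blocks are $\cV^G_{H''}=\Lambda_{ct}(\cN^G_{H''})$ where $\cN^G_{H''}$ is an $h$-neighbourhood of $H''$ inside the subgroups of finite Weyl group, and such a neighbourhood routinely contains subgroups of strictly smaller dimension. Concretely, the dihedral subgroups $D_{2n}$ ($n\geq 3$) of $O(2)\subseteq SO(3)$ have trivial identity component and finite Weyl group, yet they are allocated to the block $\cV^{SO(3)}_{O(2)}$ dominated by $O(2)$ because they converge to $O(2)$ in the $h$-topology; they are not rediscovered at the stage $H_e=1$. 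So the lemma must be read as a one-way containment — the unallocated subgroups with identity component $H_e$ are \emph{among} those with $N_W(F)$ finite — which is all the paper proves and all the enumeration strategy requires (the already-allocated ones are simply discarded by inspection, as the paper does with $D_2$ and $D_4$ in $SO(3)$). Your own flagged ``delicate point'' is thus not a gap to be closed but a sign that you were proving a stronger statement than the one asserted; dropping the last paragraph leaves a correct proof.
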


\begin{proof}
Any subgroup $H$ with identity component $H_e$ will lie in
$N_G(H_e)$, so the possibilities for $H$ correspond to finite
subgroups of the Weyl group $W_G(H_e)$. For each such finite subgroup
$F$ we may consider the inverse image $H$ in $N_G(H_e)$. If
$W_G(H)$ is infinite then $H$ is cotoral in a group considered
previously. Since $N_G(H)\subseteq N_G(H_e)$,
$N_G(H)/H_e=N_W(H/H_e)=N_W(F)$.
\end{proof}

\subsection{Conjugacy classes of maximal subgroups for $SU(3)$}
\label{subsec:su3notu2} 
The local types of subgroups of $SU(3)$ of rank 2 are $SU(3), 
SU(2)\times T$ and $T^2$. The local types of rank 1 are $SU(2)$ and 
$T$, and in rank 0 there is only the trivial local type.

{\em Local type $SU(3)$}: 
The group $G$ itself is the only subgroup of local type of $SU(3)$ and it is both 
h-isolated and cotorally isolated, giving 
$\cN^G_G=\cV^{G}_G=\{G\}$. 

{\em Local type $T^2$}:
 By the uniqueness of maximal tori, there is a single conjugacy class of connected subgroups of local type
$T^2$. It is well known that the Weyl group of the maximal torus is
$\Sigma_3$, and  the normalizer is a split extension $T^2\cdot
\Sigma_3$ (The group $\Sigma_3$ is generated by transpositions. We
lift these to the negative of the permutation matrices). 

Any subgroup $H$ of $N_G(T^2)$ with identity component $T^2$
 is of the form $\pi^{-1}(\Hbar)$ for a subgroup $\Hbar$ of
 $\Sigma_3$ where $\pi: N_G(T^2)\lra
\Sigma_3$ is the quotient map. This gives four components:
$$\cV^G_{T^2,1}, \cV^G_{T^2,C_2}, \cV^G_{T^2,C_3}, \cV^G_{T^2,\Sigma_3}$$

{\em Local type $SU(2)\times T$}: The simple
representations are of the form $V_i\tensor z^j$ where $V_i$ is the
$(i+1)$-dimensional representation of $SU(2)$ with weights $x^i,
x^{i-1}y, x^{i-2}y^2, \ldots, y^i$, and $z$ is the natural
representation of $T$. The representation $V_i\tensor z^j$ has
determinant $z^{(i+1)j}$, so it only lies in the special unitary group
if $j=0$, when it is not almost-faithful. The only options for an almost-faithful map to
$SU(3)$ are therefore $(V_1\tensor z^j)\oplus (V_0\tensor
z^{-2j})$. If  $j=0$ this is not almost-faithful, and if $|j|\geq
1$ then the representation factors through $SU(2)\times T/T[j]$.
Since we only care about the image, we only 
need to consider the case  $(V_1\tensor
z)\oplus (V_0\tensor z^{-2})$, giving the subgroup $U(2)=\{ A\oplus
(t) \st
A\in U(2), t=\det(A)^{-1}\}$. We note that $U(2)$ consists of all
matrices zero at $(1,3), (2, 3), (3,1), (3,2)$: 
$$U(2)=\left(
  \begin{array}{ccc}
    *&*&0\\
    *&*&0\\
    0&0&*
     \end{array}
   \right).$$
   and hence it it is self-normalizing with $W_G(U(2))=1$. This gives
   one further block $\cV^{G}_{U(2)}$. 

{\em Local type $T$}: 
 Connected subgroups of local type $T$ are conjugate to a subgroup of
 the maximal torus. There are three singular circles, all
of them conjugate; for definiteness, we focus on the subgroup $\diag (z, z,z^{-2})$
which is central in our chosen copy of $U(2)$.
One may check that the normalizer is also $U(2)$, so
the subgroups have already been seen in our analysis of $U(2)$. The other
circles are all regular, so their normalizers are the same as that of
the maximal torus, so these subgroups they have been seen in our
analysis of subgroups of $N_G(T^2)$.

{\em Local type $SU(2)$}:
Subgroups of local type $SU(2)$ have almost-faithful 3 dimensional
representations $V_1\oplus V_0$ and $V_2$. Those of the first type are
copies of $SU(2)$ (since $V_1$ is faithful) and those of the second
type are $SO(3)$ since $V_2$ has central elements in the kernel. One
checks  that $N_G(SU(2))=N_G(U(2))=U(2)$ and hence we have already seen
all subgroups of the first type. Finally, $SO(3)$ is the set of real
matrices in $SU(3)$, and therefore consists of points preserving a
real subspace of $\C^3$, so that $N_G(SO(3))=SO(3)$.

{\em Local type $1$}:
Finally there are finite subgroups not contained in any of the previous
groups.

Blichfeldt's classification \cite[Chapter 12]{BDM} of  finite
subgroups of $SU(3)$ has 5 families of 
classes denoted A, B, C, D, E. Type A consists of  abelian
subgroups, and all  of these lie  in a maximal torus, so have been
already considered above. Those with a two dimensional faithful representations are
subgroups of $U(2)$ and these have been dealt with above. Types
C and D occur insisde $T^2\sdr \Sigma_3$ and so have also been
considered above.



Finally, in Type E there are 7 exceptional finite subgroups
$PSL_2(7) , A_5\times C_3, PSL_2(7)\times C_3, A_6\cdot 3,
G_{36}\cdot 3 , G_{72}\cdot 3 , G_{216}\cdot 3$, giving 7 more
singleton blocks. For the general form of the model, the
 exact structure of these groups is not important; their Weyl groups
 are finite and the structure is read off  \cite{BDM} and the summary 
\cite{Ludl}.

\subsection{The partition for $SU(3)$} We are now in a position to
give a complete summary of the partition of closed subgroups of
$SU(3)$ up to conjugacy.

\begin{thm}
  \label{thm:su3blocks}
There is a partition of $\fX_G=\sub(G)/G$ into 18 Zariski clopen sets
$\cV^G_{H_e,F}$ as follows. We tabulate the pairs $(H_e, F)$ where
$H_e$ represents a conjugacy class of connected subgroups and $F$ is a
finite subgroup of $W_G(H_e)$. This is in order of decreasing size of
$(H_e,F)$. For each such pair we name the dimension of the
block $\cV^G_{H_e,F}$ dominated by $(H_e,F)$, the Weyl group of
$H$ (always finite). This much is intrinsic data. We then list the
subgroup where $\cV^G_{H_e,F}$ is first treated appropriately and the reference. 

$$\begin{array}{l|cccc|}
(H_e, F)&\dim (\cV^G_{(H_e,F)})&W_G(H)&\hat{H}& \\
\hline 
(G,1)&0&1&SU(3)&\mathrm{Discrete}\\
(U(2),1)&1&1&U(2)&[10]\\
(T^2, \Sigma_3) &1&1&SU(3)&[8]\\
(T^2, C_3) &1&C_2&SU(3)&[8]\\
(T^2, C_2) &2&1&U(2) &[10]\\
(T^2, C_1) &2&\Sigma_3&T^2&[5,2]\\
(SO(3),1)&0&1&SO(3)&\mathrm{Discrete}\\
(Z(U(2)), A_5)&1&1&U(2)&[8]\\
(Z(U(2)), \Sigma_4)&1&1&U(2)&[8]\\
(Z(U(2)), A_4)&1&C_2&U(2)&[8]\\
(Z(U(2)), D_4)&1&\Sigma_3&U(2)&[8]\\
(1,PSL_2(7))&0&C_3&SU(3)&\mathrm{Discrete}\\
(1,PSL_2(7)\times C_3)&0&1&SU(3)&\mathrm{Discrete}\\
(1,A_6\cdot 3)&0&1&SU(3)&\mathrm{Discrete}\\
(1,A_6)&0&C_3&SU(3)&\mathrm{Discrete}\\
(1,G_{36}\cdot 3)&0&1&SU(3)&\mathrm{Discrete}\\
(1,G_{72}\cdot 3)&0&1&SU(3)&\mathrm{Discrete}\\
(1,G_{216}\cdot 3)&0&1&SU(3)&\mathrm{Discrete}\\
\hline 
\end{array}$$
\end{thm}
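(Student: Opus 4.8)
The plan is to read Theorem \ref{thm:su3blocks} as the assembly of the subgroup analysis of the two preceding sections, organised by the three-step strategy described in Section \ref{sec:newsu3subgps}: enumerate the local types, classify the connected subgroups $H_e$ of each local type up to $SU(3)$-conjugacy, and then classify the finite extensions by passing to finite subgroups $F\subseteq W_G(H_e)$. Every conjugacy class of subgroups arises this way, and by the block-decomposition lemma there, the pairs $(H_e,F)$ that index blocks are exactly those for which $F$ has finite normalizer in $W_G(H_e)$ and is not already absorbed into a block of higher rank. I would split the bookkeeping into the subgroups conjugate into $U(2)$, whose blocks were determined in \cite{u2q} and whose fusion into $SU(3)$-conjugacy is controlled by Lemmas \ref{lem:u2su3fusion} and \ref{lem:normfull}, and the subgroups not conjugate into $U(2)$, enumerated in the Proposition of Section \ref{sec:newsu3subgps}.

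Running through the local types in decreasing rank, as in Subsection \ref{subsec:su3notu2}, produces the count $18$. Local type $SU(3)$ contributes the single block $(G,1)$. Local type $T^2$ contributes four blocks, indexed by the conjugacy classes $C_1,C_2,C_3,\Sigma_3$ of subgroups of $W_G(\T)=\Sigma_3$, the dominant subgroup being the preimage of $F$ in $N_G(\T)$. Local type $SU(2)\times T$ yields the single almost-faithful image $U(2)$, hence $(U(2),1)$, and local type $SU(2)$ adds only $SO(3)$, the copies of $SU(2)$ already lying in $U(2)$. Local type $T$ contributes four blocks dominated by the singular circle $Z(U(2))$: its normalizer is $U(2)$, so $W_G(Z(U(2)))=U(2)/Z(U(2))=SO(3)$, and the finite subgroups of $SO(3)$ not absorbed into a higher block are the exceptional subgroups $A_5,\Sigma_4,A_4,D_4$, the cyclic and larger dihedral ones being cotoral in, or h-accumulating at, the maximal torus normalizer and so folding into the $T^2$-blocks, as do the regular circles. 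Finally local type $1$ contributes the seven exceptional finite subgroups of Blichfeldt type E \cite{BDM}, the remaining Blichfeldt families lying in $U(2)$ or in $N_G(\T)$. This tallies as $1+4+1+1+4+7=18$; the analysis of \cite{u2q} supplies the data for the seven blocks visible inside $U(2)$ --- namely $(T^2,C_1)$, $(T^2,C_2)$, $(U(2),1)$ and the four $(Z(U(2)),F)$ --- while the Proposition supplies the remaining eleven. The dimension, the Weyl group $W_G(H)$, and the dominant group $\hat H$ of the last column are then read off, the Weyl groups from the normalizer computations of Subsection \ref{subsec:su3notu2} (for instance $N_G(U(2))=U(2)$ and $N_G(SO(3))=SO(3)$ give trivial Weyl groups) and the dimensions and $\hat H$ from the references for the block in which each pair is first treated.

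The main work, and the expected obstacle, is to check that this list is a partition into Zariski \emph{clopen} sets, equivalently that no cotoral inclusion $K\normal H$ with $H/K$ a torus has its two ends in distinct blocks and that each $\cV^G_{H_e,F}$ is h-clopen. For the cotoral condition I would use two facts. First, such an inclusion forces $K$ and $H$ to have the same image under $N_G(H_e)\to W_G(H_e)$, since $H/K$ connected makes the induced map onto a subquotient of the finite group $W_G(H_e)$ trivial; this preserves the finite datum $F$ and so separates the four $T^2$-blocks, and likewise the four $Z(U(2))$-blocks, from one another. Second, the remark opening Section \ref{sec:newsu3subgps} --- that a subgroup not conjugate into $U(2)$ never has a cotoral subgroup inside $U(2)$ --- together with Lemma \ref{lem:u2su3fusion} separates the new blocks from the $U(2)$-blocks and keeps the images of distinct $U(2)$-blocks apart. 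The delicate point is the h-topology: one must verify each block is h-open as well as h-closed, so that a Hausdorff-metric limit cannot carry a subgroup with finite part $F$ to one with a strictly larger finite part. This is controlled by writing each block as the cotoral closure $\Lambda_{ct}(\cN^G_H)$ of an h-neighbourhood of its dominant subgroup, and by the conjugacy analysis of Lemmas \ref{lem:central} and \ref{lem:normfull}, which shows that the h-neighbourhoods chosen for different dominant pairs of a fixed local type are disjoint; assembling these separations yields the clopen partition into the $18$ blocks listed.
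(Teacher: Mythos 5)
Your proposal is correct and follows essentially the same route as the paper, which presents this theorem as a tabulated summary of the preceding sections rather than giving a separate proof: the local-type-by-local-type enumeration (with the Blichfeldt type E groups for local type $1$), the identification of the seven blocks inherited from $U(2)$ via the fusion lemmas, and the appeal to the general cotoral-closure-of-an-h-neighbourhood description of blocks for clopen-ness are exactly the ingredients the paper assembles. The only looseness is in your phrasing of the cotoral-separation step (the two ends of a cotoral inclusion need not share an identity component, so the comparison of finite parts should be made via the surjection $\pi_0(K)\twoheadrightarrow\pi_0(H)$ induced by $H=KH_e$), but the intended argument is the right one.
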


\section{Models}
\label{sec:models}
Having described the partition of $\fX_{SU(3)}$ into 18 blocks, we may
describe the models $\cA (SU(3)|\cV^{SU(3)}_H)$, and explain where to
find the proofs that each does give a model.  

\subsection{Dimension 0} 
There are nine 0-dimensional singleton blocks. By definition their
Weyl group is finite, so the data for the
one dominated by $H$ is just the finite group $W_G(H)$.
 It is shown in
\cite{gfreeq2} that $\Q[W_G(H)]$-modules give a model.

Seven are finite  groups occurring first for $SU(3)$. The other two
are $SO(3)$ and $SU(3)$ itself, both of which are self-normalizing. 

\subsection{Dimension 1} 
There are seven 1-dimensional blocks. The data for these consists
of a sheaf of rings and a component structure. 

The blocks dominated by $(T^2, \Sigma_3)$ and $(T^2, C_3)$ occur
first for $SU(3)$. As blocks of the corresponding toral group they are
of Type 0, so there is nothing to say
about the sheaf of rings. The component structure in the ambient toral
group is determined in \cite{t2wqalg}. However the sheaf of rings and
component structure is a little different in $G=SU(3)$ itself. Indeed,
some of the groups (eg the element of order 3) have infinite Weyl groups in the whole
groups. Similarly, even subgroups with finite Weyl groups in the
ambient toral group could in principle  have a bigger Weyl group in
$SU(3)$.

\begin{lemma}
  Let $H=T^2\sdr C_3$ or $T^2\sdr \Sigma_3$ and $G=SU(3)$
  and suppose $K$ is a full subgroup of $H$ with toral part $S$.

 (a)  If $S$ is not singular $N_G(K)=N_H(K)$.

 (b) If $S$ is singular then $S$ is central, then $K$ is
 self-normalizing in $H$, so that  $W_H(K)=1$.
 If $H=T^2\sdr C_3$ then  $W_G(K)$ is a 2-torus, and
 if $H=T^2\sdr \Sigma_3$ then  $W_G(K)$ is of order 2. 
\end{lemma}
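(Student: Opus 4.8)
The plan is to carry out everything inside $N_G(\T)=\T\rtimes\Sigma_3$, using that the toral part $S=K\cap\T$ is invariant under the component group of $K$: since $K$ is full it contains a lift $\sigma$ of the $3$-cycle, and as $\T\normal N_G(\T)$ we have $S\normal K$, so $\sigma S\sigma^{-1}=S$. I would first settle the dichotomy in (b). The element $\sigma$ permutes the three root-kernels $U_\alpha$ cyclically, so if $S$ is singular---forcing $S\subseteq U_\alpha$ for a single $\alpha$ by Lemma \ref{lem:singsub}---then $S=\sigma S\sigma^{-1}\subseteq U_{\alpha'}$ for a second kernel, and $S$ lies in two distinct such circles. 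By Lemma \ref{lem:central} such a subgroup is trivial or the centre, so $S$ is central of order $1$ or $3$ and $K$ is finite. This is the first clause of (b).

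For (a) the goal is to promote the obvious inclusion to $N_G(K)\subseteq N_G(\T)$. Non-singularity of $S$ makes available the opening lemma of \S\ref{subsec:singsub} (that $N_G(S)\subseteq N_G(\T)$ as soon as $S$ meets the regular locus), so it suffices to see that $S$ is intrinsic to $K$, i.e.\ that $gSg^{-1}=S$ for every $g\in N_G(K)$. I would establish this by recognising $S=K\cap\T$ as a characteristic subgroup of $K$---for instance as the (closure of the) maximal normal abelian subgroup meeting the regular locus---the point being that the non-singular element of $S$ cannot be conjugated into a singular circle, so no competing maximal torus can contribute (Lemma \ref{lem:central}). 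Granting $N_G(K)\subseteq N_G(\T)$, the only remaining source of new fusion is an element of $\Sigma_3\setminus C_3$, and Lemma \ref{lem:normfull} rules this out, since no order-$3$ permutation stabilises such an $S$; hence $N_G(K)=N_H(K)$.

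The heart of (b) is the two Weyl-group calculations, which I would do explicitly in $N_G(\T)$. With $S$ central the generator $\sigma$ has eigenvalues $1,\omega,\omega^2$ and is therefore regular, so $C_G(\sigma)$ is a maximal torus $T'$ containing all of $K$. In the $C_3$ case $K=\langle S,\sigma\rangle$ is centralised by the whole of $T'$, and this $2$-torus is the substance of $W_G(K)$; in the $\Sigma_3$ case the additional reflection lift in $K$ breaks the centralising torus and a direct count pins $W_G(K)$ to order $2$. The self-normalising statement $W_H(K)=1$ would come from the same computation restricted to $H$. I expect the genuine obstacle to be the bookkeeping forced by the centre of $SU(3)$: it lies in every maximal torus and in every $S$ we meet, so the naive normalisers $N_H(K)$ and $N_G(K)$ acquire central contributions that must be tracked and cancelled with care before the stated answers (self-normalising in $H$; a $2$-torus, respectively an order-$2$ group, in $G$) emerge.
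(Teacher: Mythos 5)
Your overall route coincides with the paper's: the dichotomy in (b) is exactly the paper's argument ($S$ is normal in the full subgroup $K$, hence invariant under a lift of the $3$-cycle, so a singular $S$ would lie in more than one singular circle and Lemma \ref{lem:central} forces it to be trivial or central), and for (a) both you and the paper obtain $a\in N_G(S)$ from Lemma \ref{lem:normfull}(i) and then $a\in N_G(\T)$ from non-singularity. One local confusion in (a): you write that ``no order-$3$ permutation stabilises such an $S$'', but $S$ \emph{is} stabilised by an order-$3$ permutation by construction, being normal in the full subgroup $K\supseteq\langle\sigma\rangle$; what has to be excluded is a \emph{transposition} stabilising $S$, and the fact you want from the proof of Lemma \ref{lem:normfull} is that no lattice invariant under $(123)$ is also invariant under $(12)$. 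The idea is salvageable, but as written the implication points the wrong way.

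The genuine gap is in the Weyl-group computations of (b), which are the substance of the lemma and which you leave as assertions (``this $2$-torus is the substance of $W_G(K)$'', ``a direct count pins $W_G(K)$ to order $2$'', ``would come from the same computation''). Identifying $T'=C_G(\sigma)$ as a maximal torus containing $K$ is the right first move and agrees with the paper (every lift of the $3$-cycle has eigenvalues $1,\omega,\omega^2$ and so is regular), but in the $C_3$ case this only gives the lower bound $T'=C_G(K)\subseteq N_G(K)$. To finish one must bound $N_G(K)$ from above: since $K$ is non-singular in $T'$, its normaliser lies in $N_G(T')$, and one must then determine which elements of $W_G(T')\cong\Sigma_3$ preserve $K$ --- the paper asserts that none do, whence $N_G(K)=T'$ and $W_G(K)=T'/K$ is a $2$-torus; in the $\Sigma_3$ case the corresponding analysis of $K=H(C_3,\tau)$ gives the order-$2$ answer, and the central case $S=C_3$ is handled by passing to $PSU(3)$. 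None of this appears in your write-up, and your closing paragraph explicitly defers it; without the upper bound on $N_G(K)$ the stated values of $W_G(K)$, and the claim $W_H(K)=1$, are not established.
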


\begin{proof}
  Suppose $K=H(S,\sigma)$ and $a\in N_G(K)$. Then by Lemma
  \ref{lem:normfull} we find $a\in N_G(S)$. If $S$ is not singular in $G$ then
  this means $a\in N_G(T^2)$.  Since $S$ is invariant under $C_3$ in
  both cases, this means that $S$ is central, and therefore $S=1$ or
  $S=C_3$.

  If $S=1$ and $H=T^2\sdr C_3$ then $K$ is a non-central subgroup of order
  $3$. It lies in some (other) maximal torus $T'$, where it is
  non-singular (the component group of $H$ permutes the three
  coordinates transitively). Its normalizer therefore lies in $N_G(T')$, and it is
  not invariant under any non-identity element of $W_G(T')$. Its
  normalizer is thus $T'$.  If $H=T^2\sdr \Sigma_3$ then may argue
  similarly with the Sylow 3-subgroup, so that $K=H(C_3, \tau)$ for an
  involution $\tau$ and $W_G(K)$ is of order 2. 

  If $S=C_3$ is central we may argue similarly in $G/Z=PSU(3)$.
   \end{proof}

The five blocks dominated by a subgroup of $U(2)$ are of Type 1 in
$U(2)$. The one dominated by $U(2)$ is straightforward.

\begin{lemma}
Any subgroup $K$ in the block dominated by $U(2)$  has the same normalizer in $SU(3)$
as in $U(2)$.
\end{lemma}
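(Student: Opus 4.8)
The plan is to establish the inclusion $N_{SU(3)}(K)\subseteq U(2)$. Since $N_{U(2)}(K)=N_{SU(3)}(K)\cap U(2)$ for any $K\subseteq U(2)$, this inclusion immediately yields
$$N_{SU(3)}(K)=N_{SU(3)}(K)\cap U(2)=N_{U(2)}(K),$$
which is the assertion; so the entire content is to confine the $SU(3)$-normalizer inside $U(2)$.

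First I would identify the subgroups $K$ occurring in this block. By Lemma \ref{lem:u2su3fusion} there is no new fusion in the block dominated by $U(2)$, so its members coincide with those of the block $\cV^{U(2)}_{U(2)}$ described in \cite{u2q} and recalled in Section \ref{sec:proper}: these are $U(2)$ itself together with its cotoral specializations. The key structural point is that every such $K$ contains $SU(2)$, with identity component $K_e\in\{SU(2),U(2)\}$. Indeed a cotoral specialization of $U(2)$ is by definition a subgroup $K\normal U(2)$ whose quotient $U(2)/K$ is a torus, hence abelian, so that $K\supseteq[U(2),U(2)]=SU(2)$; and since $U(2)/SU(2)\cong U(1)$ has only the connected subgroups $1$ and $U(1)$, the identity component $K_e$ is $SU(2)$ when $K/SU(2)$ is finite and is $U(2)$ only when $K=U(2)$.

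With this in hand the normalizer computation is immediate. The identity component is characteristic, so $N_{SU(3)}(K)\subseteq N_{SU(3)}(K_e)$; and since $K_e$ is $SU(2)$ or $U(2)$, the self-normalizing facts $N_{SU(3)}(SU(2))=N_{SU(3)}(U(2))=U(2)$, already established in the analysis of local type $SU(2)$, give $N_{SU(3)}(K)\subseteq U(2)$, as wanted. The only step that needs genuine work is the identification of the block members and the determination of their identity components; once these are known, the lemma is a formal consequence of the characteristicity of $K_e$ together with the already-proved self-normalizing property of $SU(2)$ and $U(2)$ in $SU(3)$.
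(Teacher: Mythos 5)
Your proof is correct and follows essentially the same route as the paper: both arguments reduce to the fact that the identity component $K_e$ is characteristic, so $N_{SU(3)}(K)\subseteq N_{SU(3)}(K_e)$, and then invoke $N_{SU(3)}(SU(2))=N_{SU(3)}(U(2))=U(2)$ from the local-type analysis. The only difference is that you spell out why every member of the block has $K_e\in\{SU(2),U(2)\}$ (via $K\supseteq[U(2),U(2)]=SU(2)$ for cotoral specializations), a point the paper asserts without elaboration.
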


\begin{proof}
  The subgroup $U(2)$ itself is self-normalizing.    Any other
  subgroup in this block has identity component
  $SU(2)$,  which is also self-normalizing. Since $N_G(K)\subseteq
  N_G(K_e)$, this completes the proof.
  \end{proof}

  The remaining four need individual attention.  For notation,
  we suppose $A\in \{ A_5, \Sigma_4, A_4, D_4\}$, $\At$ is the double cover 
    of $A$ and $\Ah=Z\At=\At\times_2 Z$. 

    \begin{lemma}
      If $K$ is dominated by $\Ah$ in $U(2)$ (i.e., $K \in
      \cV^{U(2)}_{\Ah}$ ) then $N_{SU(3)}(K)=N_{U(2)}(K)$. 
        \end{lemma}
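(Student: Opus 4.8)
The plan is to exploit the rigid structure of the exceptional double cover $\Ah = \At \times_2 Z$ and the fact that its identity component is the central circle $Z = Z(U(2))$. First I would note that any subgroup $K$ in the block $\cV^{U(2)}_{\Ah}$ has identity component $K_e$ equal to either $Z$ (the full central circle) or the trivial group, since these blocks are built from a central circle together with a finite piece mapping to the exceptional subgroup $A$ of $W_{U(2)}(Z) = SO(3)$. In either case, since $N_G(K) \subseteq N_G(K_e)$, it suffices to control the normalizer of the relevant identity component in $SU(3)$.

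The key observation is that $Z$ is the central (singular) circle $\diag(\lambda,\lambda,\lambda^{-2})$, and I have already established (in the ``Local type $T$'' discussion of Section \ref{sec:newsu3subgps}) that $N_{SU(3)}(Z) = U(2)$. Thus when $K_e = Z$ we immediately get $N_{SU(3)}(K) \subseteq N_{SU(3)}(Z) = U(2)$, whence $N_{SU(3)}(K) = N_{U(2)}(K)$ and we are done. The substantive case is therefore $K_e = 1$, i.e.\ $K$ finite. Here $K$ is a finite subgroup of $SU(3)$ whose image in $SO(3)$ under the quotient $U(2) \to PU(2) = SO(3)$ is the exceptional group $A \in \{A_5, \Sigma_4, A_4, D_4\}$, so $K$ itself is (up to the central factor) one of the exceptional finite subgroups of $U(2)$.

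The plan for the finite case is to argue that the inclusion $N_{U(2)}(K) \subseteq N_{SU(3)}(K)$ is an equality by showing any $a \in SU(3)$ normalizing $K$ already lies in $U(2)$. I would do this by pinning down an intrinsic geometric feature of $K$ that any normalizer must preserve. Since $\Ah = \At \times_2 Z$, the subgroup $K$ generates (together with its center) the reducible decomposition $\C^3 = \C^2 \oplus \C$ that defines our principal $U(2)$: the exceptional groups $\At$ act irreducibly on the $\C^2$ summand and trivially (up to scalars) on the $\C$ summand, so this splitting is canonically determined by the representation theory of $K$ as the isotypical decomposition. Any $a$ normalizing $K$ must permute the isotypical summands, and because the two summands have different dimensions ($2$ versus $1$) the splitting $\C^2 \oplus \C$ is preserved, forcing $a \in U(2)$.

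The main obstacle I anticipate is verifying cleanly that each of the four exceptional groups genuinely acts with the asserted isotypical decomposition---in particular that the $2$-dimensional summand is irreducible and distinct from the $1$-dimensional summand, so that no extra symmetry of $\C^3$ can mix them. For $A_5, \Sigma_4, A_4$ this is the statement that the relevant double cover $\At$ has an irreducible $2$-dimensional representation realizing it inside $SU(2)$, which is classical; the case $A = D_4$ (dihedral) needs slightly more care since its $2$-dimensional representation could in principle split after restriction, but the binary dihedral group $\tilde{D}_4$ still acts irreducibly on $\C^2$, so the argument goes through. Once irreducibility and the dimension mismatch are in hand, the equality $N_{SU(3)}(K) = N_{U(2)}(K)$ follows formally, and combined with the $K_e = Z$ case this completes the proof.
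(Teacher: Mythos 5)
Your argument is correct, but it takes a genuinely different route from the paper. The paper's proof is a short uniform argument: every $K$ in this block contains the central involution $a=-I=\diag(-1,-1,1)$ of $U(2)$, the subgroup $\langle a\rangle$ is characteristic in $K$, so any $g\in N_{SU(3)}(K)$ centralizes $a$, and the standard centralizer formula gives $C_{SU(3)}(a)=U(2)$. You instead split into the cases $K_e=Z$ and $K_e=1$, handling the first via $N_{SU(3)}(Z)=U(2)$ and the second by showing that the isotypical decomposition of $\C^3$ under $K$ is $\C^2\oplus\C$ with summands of distinct dimensions, so any normalizing element preserves the splitting and hence lies in $U(2)$. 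The two arguments are close relatives --- the eigenspace decomposition of $\diag(-1,-1,1)$ is precisely your splitting $\C^2\oplus\C$ --- but the paper's version buys uniformity (no case division, no irreducibility check) at the cost of verifying that $-I\in K$ and that $\langle -I\rangle$ is characteristic, while yours makes the geometric content (preservation of the decomposition defining $U(2)$) explicit. One point to tighten in your finite case: $K$ need not contain $\At$; it is only a finite subgroup of $Z\At$ surjecting onto $A$, so irreducibility of $\C^2$ under $K$ itself should be argued directly --- for instance, a reducible $K$ would be conjugate into the maximal torus of $U(2)$, so its image in $PU(2)=SO(3)$ would lie in a circle, contradicting the fact that $A$ is never cyclic (even $D_4\cong C_2\times C_2$ is not). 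With that adjustment both of your cases go through.
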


        \begin{proof}
If $a$ is the central element of order 2 in $U(2)$  (ie $a=-I$) then
$\langle a\rangle $ lies in $K$ and is characteristic in $K$. Thus if
$g\in N_{SU(3)}(K)$ we have $a^g=a$ and $g\in C_G(a)$. By the standard
formula for centralizers (\cite[V.2.3]{BtD}) $C_G(a)=U(2)$.
          \end{proof}

Now that  we have determined the sheaf of rings and component structure, we
may turn to models.  We proved in \cite{gq1} that each 
1-dimensional block $\cV$ with a single height 1 point and all other
points being finite has $\cA(G|\cV)$ as a model. This applies to all
1-dimensional blocks except for the $(U(2),1)$ block. 

Furthermore, we proved in Lemma \ref{lem:u2su3fusion} that the data
for the model  is unchanged for all relevant subgroups of $U(2)$. Thus
inclusion $U(2)\lra SU(3)$ induces an equivalence for these
blocks. 

The block $(U(2), 1)$ is well behaved: all subgroups are normal
and cotoral in $U(2)$ to the methods of \cite{gq1} apply once again to
show
\begin{enumerate}
\item 
   $\cA (SU(3)|\cV^{SU(3)}_{U(2)})$ is a model for rational
$SU(3)$-spectra with geometric isotropy dominated by $U(2)$, 
\item   $\cA (U(2)|\cV^{U(2)}_{U(2)})$ is a model for rational
$U(2)$-spectra with geometric isotropy dominated by $U(2)$, and 
\item  restriction from $SU(3)$ to $U(2)$ induces an isomorphism of 
categories on this block.  
\end{enumerate}

\subsection{Dimension 2} 
There are two 2-dimensional blocks. The one dominated by the
maximal torus $\cA (G|\mathrm{toral})$  is treated in \cite{AGtoral,
  gtoralq}. However there is quite a lot to spell out. The Weyl groups
of individual abelian subgroups $A$ are as follows.

We recall that in $SU(3)$ the maximal torus $T^2$ consists of diagonal
matrices $\diag (z_1, z_2, z_3)$ with $z_1z_2z_3=1$. The singular
elements are those with some pair of $z_i$ being equal, and the centre
consists of those where all three are equal. 

\begin{lemma}
  \label{lem:normtorus}
  Suppose  $S\subseteq T^2$. 

  (a) If $S$ is non-singular then $N_G(S)\subseteq T^2\sdr \Sigma_3$, 
  and is generated by $T^2$ together with the permutations preserving 
  it. 

  (b) If $S$ is singular then (after conjugating if necessary) it lies 
  in the circle with $z_1=z_2$. If it is not central, then $N_G(S)=N_{U(2)}(S)$. 

  (c) If $S$ is central then $N_G(S)=G$. 
\end{lemma}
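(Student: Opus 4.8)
The plan is to dispatch the three parts along the trichotomy of $S\subseteq \T$ into non-singular, singular-but-not-central, and central, with parts (a) and (c) falling out immediately from material already in hand and part (b) carrying the real content. For part (a), since $S$ contains a non-singular element, the first lemma of Subsection~\ref{subsec:singsub} gives $N_G(S)\subseteq N_G(\T)=T^2\sdr\Sigma_3$. As $T^2$ is abelian and $S\subseteq T^2$, all of $T^2$ normalizes $S$, while a lift of $\sigma\in\Sigma_3$ (a signed permutation matrix) normalizes $S$ exactly when the corresponding coordinate permutation carries $S$ to itself; hence $N_G(S)$ is generated by $T^2$ together with the permutations preserving $S$. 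For part (c), a central $S$ lies in $Z(G)=\langle\diag(\omega,\omega,\omega)\rangle$, which is centralized by all of $G$, so $gSg^{-1}=S$ for every $g\in G$ and $N_G(S)=G$.

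Part (b) is the substance. Since $S$ consists entirely of singular elements, Lemma~\ref{lem:singsub} places it inside a single kernel $U_\alpha$, and after conjugating we may take this to be the circle $z_1=z_2$, i.e. $S\subseteq Z=\{\diag(z,z,z^{-2})\}$. Because $S$ is not central it contains $s=\diag(\lambda,\lambda,\lambda^{-2})$ with $\lambda^3\neq1$, so $\lambda\neq\lambda^{-2}$ and $s$ has exactly two eigenspaces, $\langle e_1,e_2\rangle$ (eigenvalue $\lambda$) and $\langle e_3\rangle$ (eigenvalue $\lambda^{-2}$). The key step I would prove is that every $g\in N_G(S)$ in fact centralizes $s$: the element $gsg^{-1}$ lies in $S$ and is conjugate to $s$, hence carries the same eigenvalue multiset $\{\lambda,\lambda,\lambda^{-2}\}$; but the only member $\diag(z,z,z^{-2})$ of $S$ with this multiset is $s$ itself (if $z^3=1$ the element is scalar, and if $z^3\neq1$ its unique doubled eigenvalue $z$ must be $\lambda$). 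Thus $gsg^{-1}=s$, giving $N_G(S)\subseteq C_G(s)$. Finally $C_G(s)=U(2)$, since $g$ must preserve each eigenspace of $s$, i.e. the decomposition $\C^2\oplus\C$ cutting out the principal $U(2)$ (or quote the centralizer formula \cite[V.2.3]{BtD}). Hence $N_G(S)\subseteq U(2)$, and together with the evident inclusion $N_{U(2)}(S)\subseteq N_G(S)$ this yields $N_G(S)=N_{U(2)}(S)$.

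The only genuine obstacle is the middle step of part (b): ruling out that some $g\in N_G(S)$ moves $s$ to a \emph{different} element of the circle $S$. This is exactly the eigenvalue-multiset bookkeeping indicated above, and once it is settled the reduction to $C_G(s)=U(2)$ is routine; parts (a) and (c) require nothing beyond the cited lemmas and the definition of centrality.
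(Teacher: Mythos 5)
Your proof is correct and follows essentially the same route as the paper: part (a) via the observation that a non-singular subgroup forces $\T^a=\T$, part (b) via Lemma \ref{lem:singsub} and the fact that a non-central singular $S$ determines the decomposition $\C^3=\C^2\oplus\C$ whose stabilizer is the principal $U(2)$, and part (c) by centrality. Your eigenvalue-multiset bookkeeping is a welcome explicit justification of the step the paper states only briefly (that any element normalizing $S$ must preserve, rather than disturb, that decomposition).
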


\begin{proof}
(a) This argument is becoming familiar. If $S$ is non singular and
$S=S^a$  then $T=T^a$.

(b) By Lemma \ref{lem:singsub} singular subgroups must be of the stated
form. 

Now if $S$ consists of elements with $z_1=z_2$ and there is an element
with $z_3\neq z_1$ we find $S$ preserves the decomposition $\C^2\oplus
\C$, and therefore any element normalizing it does too, but this characterizes $U(2)$.
 \end{proof}

The block dominated by $T^2\sdr C_2$ in the toral case  is treated in \cite{u2q}. 
We need detailed discussion of the Weyl groups in $U(2)$ and $SU(3)$.

\begin{lemma}
  Suppose  $K$ is a full subgroup of $T^2\sdr C_2$, and $S=K\cap T^2$.  

  (a) If $S$ is non-singular then $N_{SU(3)}(K)\subseteq T^2\sdr \Sigma_3$, 
  and is generated by $T^2$ together with the permutations preserving 
  $S$. 

  (b) If $S$ is singular then (after conjugating if necessary) it lies 
  in the circle with $z_1=z_2$. If it is not central, then $N_{SU(3)}(S)=N_{U(2)}(S)$. 

  (c) If $S$ is central then
$N_{SU(3)}(S)=SU(3)$. 
\end{lemma}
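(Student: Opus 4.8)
The plan is to reduce the computation of $N_{SU(3)}(K)$ to that of $N_{SU(3)}(S)$, which Lemma~\ref{lem:normtorus} already settles, and then to impose the extra condition that a conjugating element preserve the whole full subgroup $K$ rather than only its toral part $S=K\cap T^2$. The engine of the argument is the containment $N_{SU(3)}(K)\subseteq N_{SU(3)}(S)$: writing $K=H(S,\sigma)$ as a full subgroup of the maximal torus normalizer, any $a\in N_{SU(3)}(K)$ conjugates the full subgroup $K$ to itself, so Lemma~\ref{lem:normfull}(i), applied with both full subgroups taken to be $K$, forces $S^a=S$, that is $a\in N_{SU(3)}(S)$. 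One also records the refinement of Lemma~\ref{lem:normfull}(ii)--(iii), that such an $a$ agrees up to $U(2)$ with a permutation matrix of order $1$ or $3$, which will control the component in the Weyl group below.

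Granting this, parts (b) and (c) follow at once from Lemma~\ref{lem:normtorus} applied to $S$: Lemma~\ref{lem:singsub} places a singular $S$ in the circle $z_1=z_2$ after conjugation, and then Lemma~\ref{lem:normtorus}(b) gives $N_{SU(3)}(S)=N_{U(2)}(S)$ when $S$ is non-central, while Lemma~\ref{lem:normtorus}(c) gives $N_{SU(3)}(S)=SU(3)$ when $S$ is central (this last being immediate, as a central $S$ is normal in $G$). For the corresponding statement about $K$ in the non-central singular case one notes that $K\subseteq T^2\sdr C_2\subseteq U(2)$, so intersecting $N_{SU(3)}(K)\subseteq N_{U(2)}(S)$ with the condition of normalizing $K$ yields $N_{SU(3)}(K)=N_{U(2)}(K)$; there is no new fusion.

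The substantive case is (a). Here Lemma~\ref{lem:normtorus}(a) gives $N_{SU(3)}(S)\subseteq T^2\sdr\Sigma_3$, with $N_{SU(3)}(S)$ generated by $T^2$ together with the permutations preserving $S$; combined with the reduction this yields $N_{SU(3)}(K)\subseteq T^2\sdr\Sigma_3$. To cut out $N_{SU(3)}(K)$ inside this group I would project along the quotient $\pi\colon N_G(T^2)\lra\Sigma_3$. An element normalizing $K$ carries $\sigma$ into $K$, so its image normalizes $\pi(K)=\langle\bar\sigma\rangle\cong C_2$; since a transposition generates a self-normalizing $C_2$ in $\Sigma_3$, this image lies in $\langle\bar\sigma\rangle$, which in particular excludes the order-$3$ permutations (in accordance with the remark following Lemma~\ref{lem:normfull}). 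The surviving toral part is then explicit, namely those $t=\diag(z_1,z_2,z_3)$ with $\diag(z_1/z_2,z_2/z_1,1)\in S$, and these together with the Weyl element $\bar\sigma$ account for $N_{SU(3)}(K)$.

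I expect the main obstacle to be the bookkeeping in case (a). Once the reduction $N_{SU(3)}(K)\subseteq N_{SU(3)}(S)$ and the projection to $\Sigma_3$ are in hand the shape of the answer is forced, but verifying precisely which toral elements survive the condition of normalizing $K$ (rather than only $S$), and confirming via Lemma~\ref{lem:central} that no element outside $N_G(T^2)$ and no order-$3$ permutation can enter $N_{SU(3)}(K)$, is where care is needed. The singular and central cases, by contrast, drop out directly from Lemmas~\ref{lem:singsub}, \ref{lem:normtorus} and \ref{lem:normfull}.
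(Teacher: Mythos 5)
Your reduction $N_{SU(3)}(K)\subseteq N_{SU(3)}(S)$ via Lemma \ref{lem:normfull}(i), and your handling of parts (a) and (b) by rerunning the argument of Lemma \ref{lem:normtorus} inside that containment, is exactly the paper's route (the paper says precisely this and no more for (a) and (b)); your extra care in (a) about which toral elements actually normalize $K$ rather than just $S$ is a welcome refinement.

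The gap is in the central case. You dispose of (c) by observing that a central $S$ is normal, so $N_{SU(3)}(S)=SU(3)$ --- true but contentless, and it is the one case where your reduction $N(K)\subseteq N(S)$ gives no information at all, since $N(S)$ is the whole group. The point of this case (and what the paper's proof actually establishes, consistent with the lemma's role in computing Weyl groups $W_G(K)$ for the block) is the normalizer of the full subgroup $K$ itself. When $S$ is central, $S$ is trivial or the centre $C_3$, so $K$ is $C_2$ or $C_3\times C_2$, generated over $S$ by an involution that is conjugate in $U(2)$ to $\diag(1,-1)$ and hence sits in $SU(3)$ as $\diag(1,-1,-1)$. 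This element has eigenspace decomposition $\C\oplus\C^2$, so (as in the earlier argument via the centralizer formula) anything normalizing $K$ centralizes the involution and therefore lies in the corresponding copy of $U(2)$. That computation --- identifying $K$ explicitly and extracting the $\C^2$ it picks out --- is the substantive content of (c), and it is entirely absent from your proposal; without it you have not determined $N_{SU(3)}(K)$, and hence not the component structure, on the central stratum of this block.
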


\begin{proof}
To start with Lemma \ref{lem:normfull} (i) shows that the normalizer
preserves $S$. Parts (a) and (b) follow by the same argument as for
Lemma \ref{lem:normtorus}.

The remaining cases have $K=C_2$ or $K=C_3\times C_2$. In either 
case, the involution in $U(2)$ is conjugate to $\diag (1, -1)$, which
embeds as $\diag (1, -1, -1)$ in $SU(3)$. This picks out a copy of
$\C^2$.
  \end{proof}

The fact that there are only toral
inclusions are of codimension  1 means that there is a model based on
pairs without using flags. Furthermore the lack of any new fusion
means again that  restriction from $SU(3)$ to $U(2)$ induces an isomorphism of 
categories on this block, so that
$$\cA (SU(3)|\cV^{SU(3)}_{(\T, C_2)})=\cA (U(2)|\cV^{U(2)}_{(\T, C_2)}) =\cA (\T\sdr
C_2|\cV^{\T\sdr C_2}_{(\T, C_2)\geq 3}), $$
where we have emphasized in the last expression that only anticentral
subgroups of order $\geq 3$ are to be used.

\bibliographystyle{plain}
\bibliography{../../jpcgbib}
\end{document}